\newcommand\norm[1]{\left\lVert#1\right\rVert}
\newcommand{\bunderline}[1]{\underline{#1\mkern-4mu}\mkern4mu }
\newtheorem{Definition}{\textit{Definition}}
\newtheorem{Theorem}{\textit{Theorem}}
\newtheorem{Remark}{\textit{Remark}}
\renewenvironment{proof}{\textbf{\textit{Proof.}}}{\qed}
\journalname{This is a pre-print of an article published in the Journal of Scientific Computing}
\begin{document}

\title{Truncation Error Estimation in the p-Anisotropic Discontinuous Galerkin Spectral Element Method}

\titlerunning{Truncation Error Estimation in the p-Anisotropic DGSEM}        

\author{Andrés M. Rueda-Ramírez \and
        Gonzalo Rubio \and
        Esteban Ferrer \and
        Eusebio Valero
}


\institute{   A. M. Rueda-Ramírez \and G. Rubio \and E. Ferrer \and E. Valero \at
              ETSIAE (School of Aeronautics), Universidad Politécnica de Madrid. \\
              Plaza de Cardenal Cisneros 3, 28040 Madrid, Spain. \\
              Tel.: +34 913 3663 26 - Ext. 205\\
              \email{am.rueda@upm.es}           
           \and
              A. M. Rueda-Ramírez \and G. Rubio \and E. Ferrer \and E. Valero
              \at
              Center for Computational Simulation, Universidad Politécnica de Madrid. \\
              Campus de Montegancedo, Boadilla del Monte, 28660 Madrid, Spain.
}

\date{Received: date / Accepted: date}

\maketitle

\begin{abstract}
In the context of Discontinuous Galerkin Spectral Element Methods (DGSEM), $\tau$-estimation has been successfully used for p-adaptation algorithms. This method estimates the truncation error of representations with different polynomial orders using the solution on a reference mesh of relatively high order.

In this paper, we present a novel anisotropic truncation error estimator derived from the $\tau$-estimation procedure for DGSEM. We exploit the tensor product basis properties of the numerical solution to design a method where the total truncation error is calculated as a sum of its directional components. We show that the new error estimator is cheaper to evaluate than previous implementations of the $\tau$-estimation procedure and that it obtains more accurate extrapolations of the truncation error for representations of a higher order than the reference mesh. The robustness of the method allows performing the p-adaptation strategy with coarser reference solutions, thus further reducing the computational cost. The proposed estimator is validated using the method of manufactured solutions in a test case for the compressible Navier-Stokes equations. 

\keywords{High-order discontinuous Galerkin \and Spectral methods \and p-Anisotropic representations \and $\tau$-Estimation \and Truncation error \and Anisotropic p-adaptation}

\subclass{65M15 \and 65M50 \and 65M60 \and 65M70}
\end{abstract}

\section{Introduction}

High-order Discontinuous Galerkin (DG) methods are becoming a popular alternative to low order methods for solving Partial Differential Equations (PDEs) because of their high accuracy and flexibility \cite{Wang2013High,Cockburn2000}. Among those, the Discontinuous Galerkin Spectral Element Method (DGSEM) \cite{kopriva2009implementing,Kopriva2002} is a nodal (collocation) version of the DG method on hexahedral meshes which allows p-anisotropic representations and has been used in a wide range of applications \cite{Kopriva2002,Rasetarinera2001,Acosta2011,Deng2007,Fraysse2016}. In the DG approach, the continuity constraint on element interfaces is relaxed, allowing for discontinuities in the numerical solution. This feature makes them more robust than continuous methods for describing advection-dominated problems, like the ones usually encountered in fluid dynamics. Moreover, DG methods can handle non-conforming meshes with hanging nodes and/or different polynomial orders efficiently, as is necessary for mesh adaptation strategies \cite{Riviere2008,Kopriva2002,Ferrer2012}.\\

Error estimates are a powerful tool in computational sciences as they quantify how accurately a numerical solution satisfies the governing mathematical equations \cite{oberkampf2010verification,roache1998verification,Roy2010}. A precise assessment of the numerical errors is useful for defect correction (a technique that enables high accuracy by correcting the numerical solution using an estimation of the error \cite{roache1998verification,Martin1996}), or for guiding mesh adaptation strategies \cite{Zienkiewicz2006,Mavriplis1989,Mavriplis1994}. The former requires highly accurate estimates of the discretization error and, therefore, a significant amount of computational resources is usually invested in computing them \cite{Phillips2011}. The latter has been broadly studied in the literature. In particular, the most common approaches are the \textit{adjoint-based} adaptation \cite{Estep1995,Hartmann2002,Hartmann2006,Venditti2002,Pierce2004}, where the numerical error of a functional (e.g. lift or drag) is estimated, which involves a high computational cost; the \textit{feature-based} adaptation, which relies on on easy-to-compute adaptation criteria, such as the assessment of jumps across element interfaces in the case of DG discretizations \cite{Remacle2003}, or the identication of large gradients \cite{Aftosmis1994,Persson2006}; and the \textit{local-error-based} adaptation \cite{Mavriplis1989,Mavriplis1994,berger1987adaptive,Fraysse2012,Syrakos2012,Syrakos2006,Kompenhans2016a,Kompenhans2016}, which depends on the assessment of any measurable local error in all the cells of the domain. A detailed comparison of the different approaches for error estimation and adaptation can be found in \cite{Fraysse2012} in the context of finite volumes or \cite{Kompenhans2016a} for high-order DG schemes. The \textit{local-error-based} adaptation methods are interesting since, in contrast to \textit{feature-based} methods, they provide a way to predict and control the overall accuracy, and are computationally cheaper than \textit{adjoint-based} schemes \cite{Kompenhans2016a,Kompenhans2016}. The topic of our work is the development of an accurate and cheap local error estimator to drive p-anisotropic adaptation in the DGSEM. \\

Two different errors are particularly relevant. On the one hand, the discretization error is the most important, but also the most difficult error to estimate \cite{Phillips2011}. It is defined as the difference between the exact and numerical solutions to the problem and can be approximated by means of solving the Discretization Error Transport Equation (DETE) \cite{Acosta2011}, an auxiliary PDE whose approximation involves the investment of further computational resources. Some of the first works using estimations of the local discretization error in high-order methods were proposed by Mavriplis \cite{Mavriplis1989,Mavriplis1994}, who developed hp-adaptation techniques for the Spectral Element Method, and Casoni et al. \cite{EvaCasoniyAntonioHuerta2011}, who used a similar approach to evaluate where to add artificial viscosity for shock capturing in Discontinuous Galerkin discretizations.\\

On the other hand, the truncation error is defined as the difference between the discrete partial differential operator and the exact partial differential operator,
\begin{equation}
\tau(\cdot) = \mathcal{R}^N(\cdot)-\mathcal{R}(\cdot),
\end{equation}
and is usually evaluated for the exact solution of the PDE \cite{Phillips2011,Rubio2015,Kompenhans2016,Kompenhans2016a}. The truncation error is related to the discretization error through the DETE \cite{Roy2010}, where it acts as a local source term. This relation makes it useful as an indicator for mesh adaptation methods \cite{Syrakos2012,Choudhary2013} since refining the mesh where the truncation error is high reduces the discretization error in all the mesh \cite{Rubio2015}, with an additional advantage: the truncation error estimation requires less computational effort. Furthermore, in hyperbolic problems the discretization error is strongly advected, i.e. it is transmitted downstream from under-resolution areas, but the truncation error is only weakly advected. Therefore, an adaptation procedure based on the truncation error targets specifically the under-resolved areas, whereas one based on the discretization error targets the under-resolved areas and the zones downstream of them \cite{rubio2015truncation,Rubio2015}. This makes the truncation error more suitable for adaptation purposes than the discretization error. Finally, it has been shown that controlling the truncation error targets the numerical accuracy of all functionals at once \cite{Kompenhans2016}, ensuring that adapting a mesh using the truncation error leads necessarily to an error decrease in any other functional (e.g. lift or drag). For all these reasons, we focus on truncation error estimators in this paper.\\

From a practical point of view, the truncation error can be estimated using a hierarchy of meshes. On the one hand, Venditti and Darmofal \cite{Venditti2002} and Phillips et al. \cite{Phillips2017,Phillips2013} studied the possibility of estimating the truncation error by evaluating a coarse grid solution in the partial differential operator of a fine grid, an approach known as the coarse-to-fine approach. On the other hand, the fine-to-coarse approach, also known as the $\tau$-estimation method, was introduced by Brandt \cite{Brandt1984} and consists in estimating the local truncation error by using a fine grid solution interpolated to the coarse grid. Phillips \cite{Phillips2014} showed that the fine-to-coarse ($\tau$-estimation) method produces more accurate results than the coarse-to-fine approach and, therefore, it is the one retained in this work.\\

The $\tau$-estimation approach has been successfully used for adaptation purposes in low-order Finite Difference \cite{berger1987adaptive} and Finite Volume schemes \cite{Fraysse2012,Syrakos2012,Syrakos2006}. Moreover, Rubio et al. extended it to high-order methods using a continuous Chebyshev collocation method \cite{Rubio2013} and later the Discontinuous Galerkin Spectral Element Method (DGSEM) \cite{Rubio2015}. In that work, they studied the \textit{quasi-a priori} truncation error estimation, which allows estimating the truncation error without having fully converged fine solutions, and introduced the concept of \textit{isolated} truncation error (valid only for DG formulations), which only considers inner elemental contributions to the error and neglects the upwind contributions. More recently, Kompenhans et al. \cite{Kompenhans2016} applied these estimators to perform p-anisotropic adaptation for the Euler and Navier-Stokes equations, and compared $\tau$-based to featured based adaption, showing better performance for the former \cite{Kompenhans2016a}. The adaptation strategy consisted in converging a high order representation (reference mesh) to a specified global residual and then performing a single error estimation followed by a corresponding p-adaptation process. Even though their methodology is very promising, we will show that it produces a large underestimation of the error for polynomial orders that are higher than the ones in the original reference mesh. This fact makes necessary to compute the initial solution in a very refined reference mesh to avoid inaccuracies. \\

In this paper, we extend the work on high-order $\tau$-estimators by Rubio et al. \cite{Rubio2013,Rubio2015,rubio2015truncation}, and formulate a new anisotropic truncation error estimator that exploits the tensor product basis expansion of the DGSEM. The new error estimator is shown to be suitable for performing anisotropic p-adaptation, and to have two main advantages over existing truncation error estimators; first, that it requires fewer operations to estimate the truncation error of all possible combinations of polynomial orders; and second, that it yields more accurate estimations of the truncation error for representations of a higher order than the reference mesh. This feature allows using reference meshes of a lower polynomial order, hence reducing the computational cost. We also analyze the properties of the traditional \textit{non-isolated} truncation error and the \textit{isolated} truncation error. To the authors' knowledge, this is the first time that a high-order truncation error estimator based on the $\tau$-estimation method is formulated in an anisotropic/decoupled way, analyzed and tested. \\

The paper is organized as follows. In section \ref{sec:Maths}, we present the mathematical background. First, the Discontinuous Galerkin Spectral Element Method is briefly summarized; then, we detail the existing techniques for approximating the truncation error of isotropic and anisotropic representations using the $\tau$-estimation method. In section \ref{sec:NewTechnique}, the proposed anisotropic $\tau$-estimator is introduced and analyzed. In section \ref{sec:Validation}, we present a validation of the assumptions needed for formulating the new approach and study the properties of the proposed method by means of a manufactured solutions test case of the compressible Navier-Stokes equations. Finally, the conclusions are summarized in section \ref{sec:Conclusions}.

\section{Mathematical background} \label{sec:Maths}
In section \ref{sec:DGSEM}, we describe briefly the DGSEM approach. Section \ref{sec:ErrorDefinitions} contains the error definitions that will be used throughout the paper and provides an insight into the convergence properties of the different error measures. In section \ref{sec:TauEstim}, we review the $\tau$-estimation method for DGSEM schemes, and then we explain in section \ref{sec:OldPolOrders} how it has been used in the literature for obtaining anisotropic error extrapolations. 

\subsection{The Discontinuous Galerkin Spectral Element Method (DGSEM)}\label{sec:DGSEM}
We consider the approximation of systems of conservation laws, 

\begin{equation}\label{eq:NScons}
\mathbf{q}_t + \nabla \cdot \mathscr{F} = \mathbf{s},
\end{equation}
where $\mathbf{q}$ is the vector of conserved variables, $\mathscr{F}$ is the flux dyadic tensor which depends on $\mathbf{q}$, and $\mathbf{s}$ is a source term. This system represents, among others, the compressible Navier-Stokes equations, as detailed in Appendix \ref{sec:NS}. Multiplying equation \ref{eq:NScons} by a test function $\mathbf{v}$ and integrating by parts over the domain $\Omega$ yields the weak formulation:
\begin{equation} \label{eq:weak}
\int _{\Omega} \mathbf{q}_t \mathbf{v} d \Omega - \int _{\Omega} \mathscr{F} \cdot \nabla \mathbf{v} d \Omega + \int_{\partial \Omega} \mathscr{F} \cdot \mathbf{n} \mathbf{v} d \sigma = \int _{\Omega} \mathbf{s} \mathbf{v} d \Omega,
\end{equation}
where $\mathbf{n}$ is the normal unit vector on the boundary $\partial \Omega$. Let the domain $\Omega$ be approximated by a tessellation $\mathscr{T} = \lbrace e \rbrace$, a combination of $K$ finite elements $e$ of domain $\Omega^e$ and boundary $\partial \Omega^e$. Moreover, let $\mathbf{q}$, $\mathbf{s}$, $\mathscr{F}$ and $\mathbf{v}$ be approximated by piece-wise polynomial functions (that are continuous in each element) defined in the space of $L^2$ functions

\begin{equation}
\mathscr{V}^N = \lbrace \mathbf{v}^N \in L^2(\Omega) : \mathbf{v}^N\vert_{\Omega^e} \in \mathscr{P}^N(\Omega^e) \ \ \forall \ \Omega^e \in \mathscr{T} \rbrace,
\end{equation}
where $\mathscr{P}^N(\Omega^e)$ is the space of polynomials of degree at most $N$ defined in the domain of the element $e$. Remark that the functions in $\mathscr{V}^N$ may be discontinuous at element interfaces and that the polynomial order $N$ may be different from element to element. Equation \ref{eq:weak} can then be rewritten for each element as:

\begin{multline} \label{eq:weak2}
\int _{\Omega^e} {\mathbf{q}^e_t}^N {\mathbf{v}^e}^N d \Omega^e - \int_{\Omega^e} {\mathscr{F}^e}^N \cdot \nabla {\mathbf{v}^e}^N d \Omega^e \\+ \int_{\partial \Omega^e} {\mathscr{F}^*} \left( {\mathbf{q}^e}^N, {\mathbf{q}^-}^N, \mathbf{n} \right)  {\mathbf{v}^e}^N d \sigma^e = \int _{\Omega^e} {\mathbf{s}^e}^N {\mathbf{v}^e}^N d \Omega^e,
\end{multline}
where the superindex ``$e$" refers to the functions as evaluated inside the element $e$, i.e. ${\mathbf{q}^e}^N = {\mathbf{q}^N}\rvert_{\Omega^e}$; whereas the superindex ``$-$" refers to the value of the functions on the external side of the interface $\partial \Omega^e$. The numerical flux function, ${\mathscr{F}^*}$, allows to uniquely define the flux at the element interfaces and to weakly prescribe the boundary data as a function of the conserved variable on both sides of the boundary/interface (${\mathbf{q}^{e}}^N$ and ${\mathbf{q}^{-}}^N$) and the normal vector ($\mathbf{n}$). Multiple choices for the numerical flux functions can be found in the literature \cite{toro2013riemann}. In the present work, we use Roe \cite{Roe1981} as the advective Riemann Solver and Bassi-Rebay 1 \cite{Bassi1997} as the diffusive Riemann solver. Remark that the numerical flux must be computed in a specific manner when the representation is non-conforming \cite{Kopriva2002}. \\

Since $\mathbf{q}^N$, $\mathbf{s}^N$, $\mathbf{v}^N$ and $\mathscr{F}^N$ belong to the polynomial space $\mathscr{V}^N$, it is possible to express them inside every element as a linear combination of basis functions $\phi_n \in \mathscr{P}^N(\Omega^e)$,

\begin{eqnarray}\label{eq:PolExp}
\mathbf{q}\rvert_{\Omega^e} \approx {\mathbf{q}^e}^N = \sum_n \mathbf{Q}^e_n \phi^e_n (\mathbf{x}), & & \ \ \ \ 
\mathbf{s}\rvert_{\Omega^e} \approx {\mathbf{s}^e}^N = \sum_n \mathbf{S}^e_n \phi^e_n (\mathbf{x}), \nonumber \\
\mathbf{v}\rvert_{\Omega^e} \approx {\mathbf{v}^e}^N = \sum_n \mathbf{V}^e_n \phi^e_n (\mathbf{x}), & &
\mathscr{F}\rvert_{\Omega^e} \approx {\mathscr{F}^e}^N = \sum_n \pmb{\mathscr{F}}^e_n \phi^e_n (\mathbf{x}).
\end{eqnarray}

Therefore, equation \ref{eq:weak2} can be expressed in a discrete form as

\begin{equation} \label{eq:DiscretNSElem}
[\mathbf{M}]^e \frac{\partial \mathbf{Q}^e}{\partial t} + \mathbf{F}^e(\mathbf{Q}) = [\mathbf{M}]^e \mathbf{S}^e,
\end{equation}
where $\mathbf{Q}^e=(\mathbf{Q}^e_1, \mathbf{Q}^e_2, \cdots, \mathbf{Q}^e_n, \cdots)^T$ is the local solution that contains the coefficients of the linear combination for the element $e$; $\mathbf{Q}=(\mathbf{Q}^1,\mathbf{Q}^2, \cdots, \mathbf{Q}^K)^T$ is the global solution that contains the information of all elements; $[\mathbf{M}]^e$ is known as the elemental mass matrix, and $\mathbf{F}^e(\cdot)$ is a nonlinear spatial discrete operator on the element level:
\begin{align}
[\mathbf{M}]^e_{i,j} &= \int_{\Omega^e} \phi^e_i \phi^e_j  d \Omega^e \\
\mathbf{F}^e(\mathbf{Q})_j &= \sum_i \left[ - \int_{\Omega^e} \pmb{\mathscr{F}}_i^e \cdot \phi^e_i \nabla \phi^e_j d \Omega^e \right] + \int_{\partial \Omega^e} {\mathscr{F}^*}^N \left( \mathbf{Q}^e, \mathbf{Q}^{-}, \mathbf{n} \right)  \phi^e_j d \sigma^e.
\end{align}

Note that the operator $\mathbf{F}^e$ is applied on the global solution, since it is the responsible for connecting the elements of the mesh (weakly). Assembling the contributions of all elements into the global system we obtain:

\begin{equation} \label{eq:DiscretNS}
[\mathbf{M}] \frac{\partial \mathbf{Q}}{\partial t} + \mathbf{F}(\mathbf{Q}) = [\mathbf{M}] \mathbf{S}.
\end{equation}

In the DGSEM \cite{kopriva2009implementing}, the tesselation is performed with non-overlapping hexahedral elements of order $N=(N_1,N_2,N_3)$ (independent in every direction) and the integrals are evaluated numerically by means of a Gaussian quadrature that is also of order $N=(N_1,N_2,N_3)$. For complex geometries, it is most convenient to perform the numerical integration in a reference element and transform the results to the physical space by means of a high-order mapping:

\begin{eqnarray}
\mathbf{x}^e = \mathbf{x}^e \left( \pmb{\xi} \right), & & \pmb{\xi} = \left( \xi, \eta, \zeta \right) \in \left[ -1, 1 \right]^3,
\end{eqnarray}
where the order of ${x}^e_i$ is at most $N_i^e$ (subparametric or at most isoparametric mapping). The differential operators can be expressed in the reference element in terms of the covariant ($\mathbf{a}_i$) and contravariant ($\mathbf{a}^i$) metric tensors:

\begin{equation}
\mathbf{a}_i = \frac{\partial \mathbf{x}^e}{\partial \xi_i}, \ \ \mathbf{a}^i = \nabla \xi_i, \ \ i = 1, 2, 3.
\end{equation}

Under these mappings, the gradient and divergence operators become:

\begin{equation}
\nabla q = \frac{1}{J} \sum_{i=1}^d \frac{\partial}{\partial \xi_i} \left( J\mathbf{a}^i q \right), \ \ \nabla \cdot \mathbf{f} = \frac{1}{J} \sum_{i=1}^d \frac{\partial}{\partial \xi_i} \left( J\mathbf{a}^i \cdot \mathbf{f} \right),
\end{equation}
where the Jacobian of the transformation can be expressed in terms of the covariant metric tensor:

\begin{equation}
J = \mathbf{a}_i \cdot \left( \mathbf{a}_j \times \mathbf{a}_k \right), \ \ \left( i,j,k \right) \ cyclic.
\end{equation}

For details on how to compute the metric terms for 2D and 3D geometries, see \cite{Kopriva2006}. Furthermore, in the DGSEM the polynomial basis functions ($\phi_n$ in equation \ref{eq:PolExp}) are tensor product reconstructions of Lagrange interpolating polynomials on quadrature points in each of the Cartesian directions of the reference element:

\begin{equation}
\mathbf{q}^N = \sum_n \mathbf{Q}_n \phi_n (\mathbf{x}) = \sum_{i=0}^{N_1} \sum_{j=0}^{N_2} \sum_{k=0}^{N_3} \mathbf{Q}_{i,j,k} l_i (\xi) l_j (\eta) l_k (\zeta).
\end{equation}

Therefore, $\mathbf{Q}_n=\mathbf{Q}_{i,j,k}$ are simply the nodal values of the solution, and $[\mathbf{M}]$ is a diagonal matrix containing the quadrature weights and the mapping terms. In the present work, we make use of the Legendre-Gauss quadrature points \cite{kopriva2009implementing}.

\subsection{Definition of Errors} \label{sec:ErrorDefinitions}
In this section, we define some measures of the error that will be used throughout the paper.

\begin{Definition}[Interpolation error]
The difference between a function and its polynomial interpolant of order $N$: 
\end{Definition}

\begin{equation} \label{eq:InterpError}
\varepsilon^N_{\mathbf{q}}= \mathbf{q} - \mathbf{I}^N \mathbf{q},
\end{equation}
where $\mathbf{I}^N\mathbf{q}$ is the function that can be reconstructed using the polynomial expansion with coefficients $\mathbf{Q}_i$ (equation \ref{eq:PolExp}). For sufficiently smooth functions, in the asymptotic range the interpolation error in an element $e$ behaves as: 

\begin{equation} \label{eq:InterpErrorConvergence}
\norm{ \varepsilon^N_{\mathbf{q}}  \bigr\rvert_{\Omega^e} } \le C^e_0 e^{-N^e \eta^e_0},
\end{equation}
where $C^e_0$ and $\eta^e_0$ are constants that depend on the local smoothness of the function $\mathbf{q}$ \cite{canuto2012spectral,hesthaven2007nodal} and $N^e$ is the local polynomial order in the element $e$. In the DGSEM, the use of tensor-product bases in $d$ dimensions allows decoupling the interpolation error in directional components, each of which depends solely on the polynomial order in the corresponding direction:

\begin{equation} \label{eq:DirectionalInterpolError}
\varepsilon_{\mathbf{q}}^N = \sum_{i=1}^d \varepsilon_{\mathbf{q},i}^{N} \ \ \textrm{such that} \ \ \varepsilon_i = \varepsilon_i (N_i).
\end{equation}

As a consequence, in the p-anisotropic DGSEM, the interpolation error exhibits a tensor-product-type error bound in $d$ dimensions inside every element \cite{Rubio2015},

\begin{equation} \label{eq:AnisInterpErrorConvergence}
\norm{ \varepsilon_{\mathbf{q}}^N \big\rvert_{\Omega^e} } \le \sum_{i=1}^d C^e_{0,i} e^{-N^e_i \eta^e_{0,i}}.
\end{equation}


\begin{Definition}[Discretization error] \label{Def:DiscError}
The difference between the exact solution to the problem, $\bar{\mathbf{q}}$, and the one obtained with a discretization of order $N$, $\bar{\mathbf{q}}^N$:
\begin{equation} \label{eq:DiscError}
\epsilon^N= \bar{\mathbf{q}} - \bar{\mathbf{q}}^N.
\end{equation}
\end{Definition}

The discretization error in an element is influenced by other elements because of the advection properties of the PDE. In fact, we will decouple the discretization error in  \textit{locally-generated} and \textit{externally-generated} contributions for every element:

\begin{equation} \label{eq:DiscErrorLocalExt}
\epsilon^N \bigr\rvert_{\Omega^e} = \epsilon^N_{\Omega^e} + \epsilon^N_{\partial\Omega^e}.
\end{equation}

In the p-isotropic DGSEM, it can be assumed that the discretization error in each element behaves as \cite{Rubio2015,rubio2015truncation}:

\begin{equation} \label{eq:DiscErrorConvergence} 
\norm{\epsilon^N \bigr\rvert_{\Omega^e}} \le C^e_{\epsilon} e^{-N^e \eta^e_{\epsilon}} + \sum_{\substack{k=1 \\ k \ne e }}^K C^{*k}_{\epsilon} e^{-N^k \eta^{*k}_{\epsilon}},
\end{equation} 
where $K$ is the number of elements, $C^e_{\epsilon}$ and $\eta^e_{\epsilon}$ are constants that depend on the smoothness of the solution in the element $e$ \cite{canuto2012spectral,hesthaven2007nodal}, and $C^{*k}_{\epsilon}$ and $\eta^{*k}_{\epsilon}$ are constants that depend both on the smoothness of the solution and the advection properties of the PDE. The first term on the right-hand side corresponds to the bound of the \textit{locally-generated} discretization error ($\epsilon^N_{\Omega}$), whereas the second term is the bound of the \textit{externally-generated} discretization error ($\epsilon^N_{\partial \Omega}$) which gathers the errors that are introduced through the Riemann solver. Note that $\varepsilon^N$ is the minimum possible (lower bound of) $\epsilon^N$. For an anisotropic representation in $d$ dimensions, the expression becomes \cite{Rubio2015,rubio2015truncation,Georgoulis2003}:

 \begin{equation} \label{eq:AnisDiscErrorConvergence}
\norm{ \epsilon^N \bigr\rvert_{\Omega^e} } \le \sum_{i=1}^d C^e_{\epsilon,i} e^{-N^e_{i} \eta^e_{\epsilon,i}} + \sum_{\substack{k=1 \\ k \ne e }}^K \sum_{i=1}^d C^k_{\epsilon,i} e^{-N^k_{i} \eta^k_{\epsilon,i}}.
\end{equation}


\begin{Definition}[Quadrature error]\label{Def:QuadError}
The quadrature error, also referred to as the numerical integration error, is the difference between the exact integral of a function and its approximation by a Gaussian quadrature:
\end{Definition}
\begin{equation}
e_{\int}^N = \int_{\Omega} q d\Omega - \int^N_{\Omega} q d\Omega,
\end{equation}
where the superindex $N$ on the integral indicates that it is approximated using a Gaussian quadrature of order $N$,
\begin{equation}
\int^N_{\Omega} q d\Omega = \sum_{j=0}^N q_j w_j,
\end{equation}
and $w_j$ are the quadrature weights.


\begin{Definition}[\textit{Non-isolated} truncation error]\label{Def:TruncError}
We define the \textit{non-isolated} truncation error of a discretization of order $N$ as the difference between the discrete partial differential operator of order $N$ and the exact partial differential operator applied to the exact solution:
\begin{equation}\label{eq:TruncError}
\tau^N = \mathcal{R}^N(\mathbf{I}^N \bar{\mathbf{q}})-\mathcal{R}(\bar{\mathbf{q}}).
\end{equation}
\end{Definition}

The exact partial differential operator can be derived from equation \ref{eq:NScons} as 

\begin{equation}
\mathcal{R}(\mathbf{q}) = \mathbf{s} - \nabla \cdot \mathscr{F} = \mathbf{q}_t, 
\end{equation}
and the discrete partial differential operator is derived from equation \ref{eq:DiscretNS} as 

\begin{equation} \label{eq:Rdiscrete} 
\pmb{\mathcal{R}}^N(\pmb{I}^N\mathbf{q}) = [\mathbf{M}] \mathbf{S} - \mathbf{F}(\pmb{I}^N\mathbf{q}),
\end{equation}
where $\pmb{\mathcal{R}}^N$ contains the sampled values of $\mathcal{R}^N$ in all the nodes of the domain and $\pmb{I}^N$ is a sampling operator. $\mathcal{R}^N$ is reconstructed  from $\pmb{\mathcal{R}}^N$ element-wise with equation \ref{eq:PolExp}. Since for steady state, $\mathcal{R}(\bar{\mathbf{q}})=0$, the \textit{non-isolated} truncation error can be then computed inserting equation \ref{eq:Rdiscrete} into \ref{eq:TruncError} as 
\begin{equation}\label{eq:TruncErrorSteady}
\pmb{\tau}^N=\pmb{\mathcal{R}}^N(\pmb{I}^N\bar{\mathbf{q}})=[\mathbf{M}] \mathbf{S} - \mathbf{F}(\pmb{I}^N\bar{\mathbf{q}}).
\end{equation}

The dependence of the \textit{non-isolated} truncation error on the discretization error is obtained by using definition \ref{Def:DiscError} and expanding equation \ref{eq:TruncErrorSteady} as a Taylor series:

\begin{equation}\label{eq:TruncErrorDiscError}
\pmb{\tau}^N=\frac{\partial \pmb{\mathcal{R}}^N}{\partial \mathbf{Q}^N} \biggr\rvert_{\bar{\mathbf{Q}}^N} \pmb{\epsilon}^N + \mathcal{O}((\pmb{\epsilon}^N)^2).
\end{equation}

Taking into account equations \ref{eq:TruncErrorDiscError} and \ref{eq:AnisDiscErrorConvergence}, and based on previous numerical results \cite{rubio2015truncation,Rubio2015}, Kompenhans et al. \cite{Kompenhans2016} stated that the truncation error in an element is bounded by

\begin{equation} \label{eq:TruncErrorAnisConv}
\norm{ \tau^N \bigr\rvert_{\Omega^e} } \le \sum_{i=1}^d C^e_{i}e^{-N^e_{i} \eta^e_{i}} + \norm{ \tau_{\partial \Omega^e}}.
\end{equation}
This expression was validated experimentally \cite{Kompenhans2016,Kompenhans2016a}. The first term in equation \ref{eq:TruncErrorAnisConv} is the bound of the \textit{locally-generated} truncation error, whereas the second term is the bound of the \textit{externally-generated} truncation error that enters through the Riemann solver and does not depend on the local polynomial orders. The second term is a consequence of the dependence of the discretization error on the solution in other elements.


\begin{Definition}[\textit{Isolated} truncation error]\label{Def:IsolTruncError}

The \textit{isolated} truncation error is defined as \cite{Rubio2015}
\end{Definition}
\begin{equation}
\hat \tau^N = \mathcal{\hat R}^N(\mathbf{I}^N \bar{\mathbf{q}}),
\end{equation}
where $\mathcal{\hat R}^N(\cdot)$ is the \textit{isolated} discrete partial differential operator, which is derived in the same manner as $\mathcal{R}^N(\cdot)$, but $\mathscr{F}$ is not substituted by $\mathscr{F}^*$ during the process (equation \ref{eq:weak2}). Therefore, the sampled form of the discrete isolated partial differential operator yields

\begin{equation}\label{eq:IsolTruncErrorSteady}
\hat{ \pmb{\tau}}^N = \hat{ \pmb{\mathcal{R}}}^N(\pmb{I}^N \bar{\mathbf{q}} )=[\mathbf{M}] \mathbf{S} - \mathbf{\hat F}(\pmb{I}^N \bar{\mathbf{q}} ),
\end{equation}
where the elemental contribution to the nonlinear discrete operator is

\begin{equation} \label{eq:IsolatedDiscreteNonlinOperator}
\mathbf{\hat F}^e(\mathbf{Q}^e)_j = \sum_i \left[ - \int^N_{\Omega^e} \pmb{\mathscr{F}}_i \cdot \phi^e_i \nabla \phi^e_j d \Omega^e \right] + \int^N_{\partial \Omega^e} \mathscr{F}^N \cdot \mathbf{n}  \phi^e_j d \sigma^e.
\end{equation}

This change eliminates the influence of the neighboring elements and boundaries in the truncation error of each element. The dependence of the \textit{isolated} truncation error on the interpolation error of the fluxes inside an element ($\varepsilon^N_{\mathscr{F},e}$) can be expressed as (see Appendix \ref{sec:IsolTruncError} and \cite{Rubio2015}):

\begin{equation} \label{eq:IsolTruncErrorInterpol}
\hat \tau^N \bigr\rvert_{\Omega^e} \approx \left( \nabla \cdot \varepsilon^N_{\mathscr{F}} \bigr\rvert_{\Omega^e}, \phi \right)^N_{\Omega^e}.
\end{equation}

This shows that $\hat \tau$ indeed depends only on the discrete representation of the numerical solution in the element $e$. Rubio et al. \cite{Rubio2015} pointed out that the \textit{isolated} truncation error might be a better sensor for adaptation algorithms for hyperbolic PDEs than its \textit{non-isolated} counterpart or the discretization error, since unlike the last two, the first one is not affected by neighbors' errors. Notice that equation \ref{eq:IsolTruncErrorInterpol} resembles the DETE \cite{Roy2010}. In this case, the isolated truncation error acts as a source term for the interpolation error. In consequence, decreasing the \textit{isolated} truncation error reduces the interpolation error.\\

Finally, it is important to note that the spectral convergence of the \textit{isolated} truncation error is similar to the \textit{non-isolated} truncation error \cite{Rubio2015,rubio2015truncation} and can be expressed as
\begin{equation} \label{eq:IsolTruncErrorAnisConv}
\norm{ \hat \tau_e^N \bigr\rvert_{\Omega^e} } \le \sum_{i=1}^d C^e_{i}e^{-N^e_{i} \eta^e_{i}}.
\end{equation}

Remark that, because of the reasons exposed above, in this case there is no \textit{externally-generated} truncation error. The \textit{hat notation} will be dropped from now on since, unless explicitly stated, the formulations in this paper are valid for both the \textit{non-isolated} and the \textit{isolated} truncation errors.\\

\subsection{$\tau$-Estimation method} \label{sec:TauEstim}

Since in general the exact solution to the problem is not available, we are interested in using an estimation for equations \ref{eq:TruncErrorSteady} and \ref{eq:IsolTruncErrorSteady}. The $\tau$-estimation method makes use of an approximate solution on a reference mesh of order $P>N$ instead of the exact one. The most straightforward methodology is to converge this high-order solution to a low residual near machine round-off, $\bar{\mathbf{Q}}^P$. This is known as the \textit{a posteriori} approach. In practice, one can also use a non-converged solution, $\mathbf{\tilde Q}^P$. This is known as the \textit{quasi-a priori} approach. In this paper, we use the following formulation, which is valid for the \textit{a-posteriori} method and the \textit{quasi a-priori} approach without correction:

\begin{equation}\label{eq:TruncError!}
\pmb{\tau}_P^N = \pmb{\mathcal{R}}^N(\mathbf{I}_P^N\mathbf{Q}^P) = [\mathbf{M}^N]\mathbf{S}^N-\mathbf{F}^N(\tilde{\mathbf{Q}}^P),
\end{equation}
where $\mathbf{I}_P^N$ is an interpolation operator from order $P$ to order $N$. For compactness, the notation of this work omits the interpolation matrix such that $\pmb{\mathcal{R}}^N(\mathbf{Q}^P) = \pmb{\mathcal{R}}^N(\mathbf{I}_P^N\mathbf{Q}^P)$. Equation \ref{eq:TruncError!} is valid for both the \textit{isolated} (inserting the $hats$) and the \textit{non-isolated} truncation error. Note that the truncation error estimation can be easily performed for anisotropic polynomial representations of $d$ dimensions. For instance, in a 2D anisotropic case, equation \ref{eq:TruncError!} can be rewritten as:

\begin{equation}\label{eq:TruncError2D!}
\pmb{\tau}_{P_1P_2}^{N_1N_2} = [\mathbf{M}^{N_1N_2}]\mathbf{S}^{N_1N_2}-\mathbf{F}^{N_1N_2}(\tilde{\mathbf{Q}}^{P_1P_2}).
\end{equation}

\subsection{Low order extrapolation of the truncation error estimations}\label{sec:OldPolOrders}

In this section, we review the method proposed by Kompenhans et al. \cite{Kompenhans2016} to extrapolate the $\tau$-estimations of anisotropic representations. This method was successfully used to perform a p-adaptation strategy \cite{Kompenhans2016,Kompenhans2016a}. We will show that their strategy can be classified as a low order extrapolation. In order to do so, let us first introduce the concept of truncation error map.

\begin{Definition} [Truncation error map] \label{Def:TruncErrorMap}
The (graphical) representation of the truncation error behavior inside an element with respect to the polynomial order as a $(d+1)$-dimensional plot of $\log \norm{\tau^N}$ as a function of the polynomial order in every direction of the reference element $N=(N_1,\cdots,N_d)$, where $d$ is the number of dimensions. 
\end{Definition}

Because of the spectral convergence of the truncation error in the asymptotic range, the one-dimensional (or isotropic $d$-dimensional) map turns out to be a discrete scatter plot of points that describe a linear function with a negative slope (the convergence rate $\eta$), as shown in Figure \ref{fig:TruncErrorMap1D}. \\

\begin{figure}[h]
\begin{center}
\subfigure[One-dimensional map]{\label{fig:TruncErrorMap1D} \includegraphics[width=0.45\textwidth]{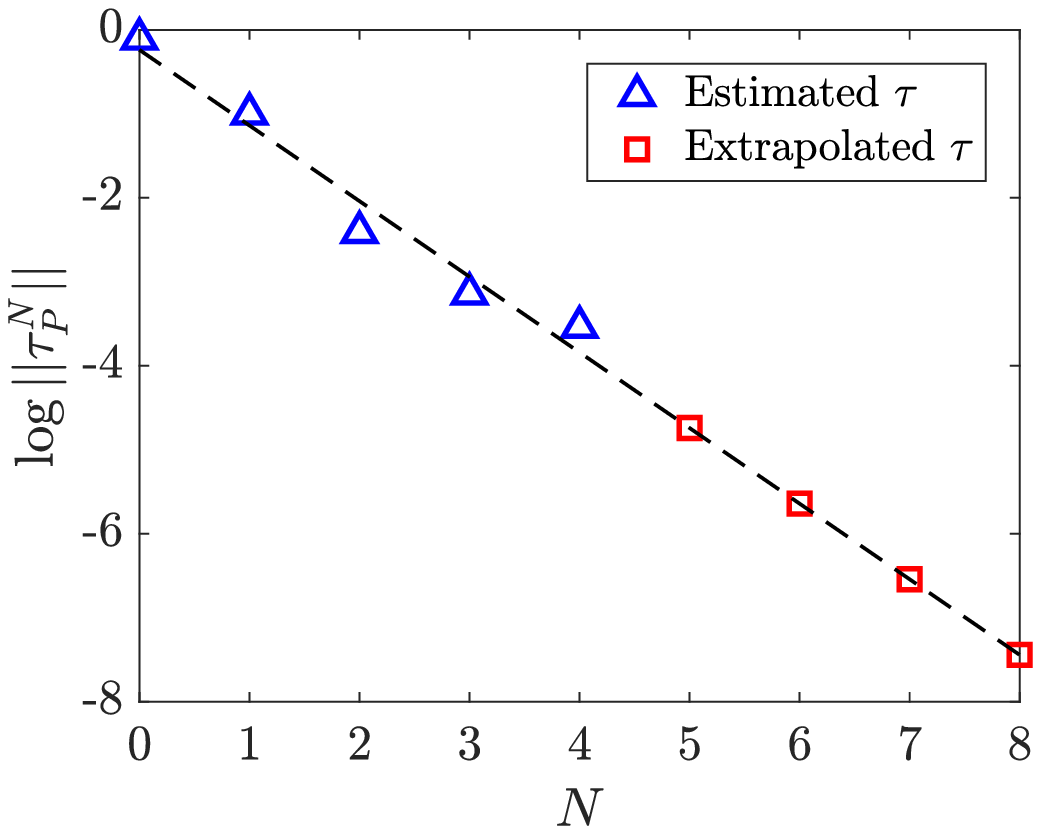}} \qquad
\subfigure[Two-dimensional map used by Kompenhans et al. \cite{Kompenhans2016}]{\label{fig:HyperPlaneKompen} \includegraphics[width=0.45\textwidth]{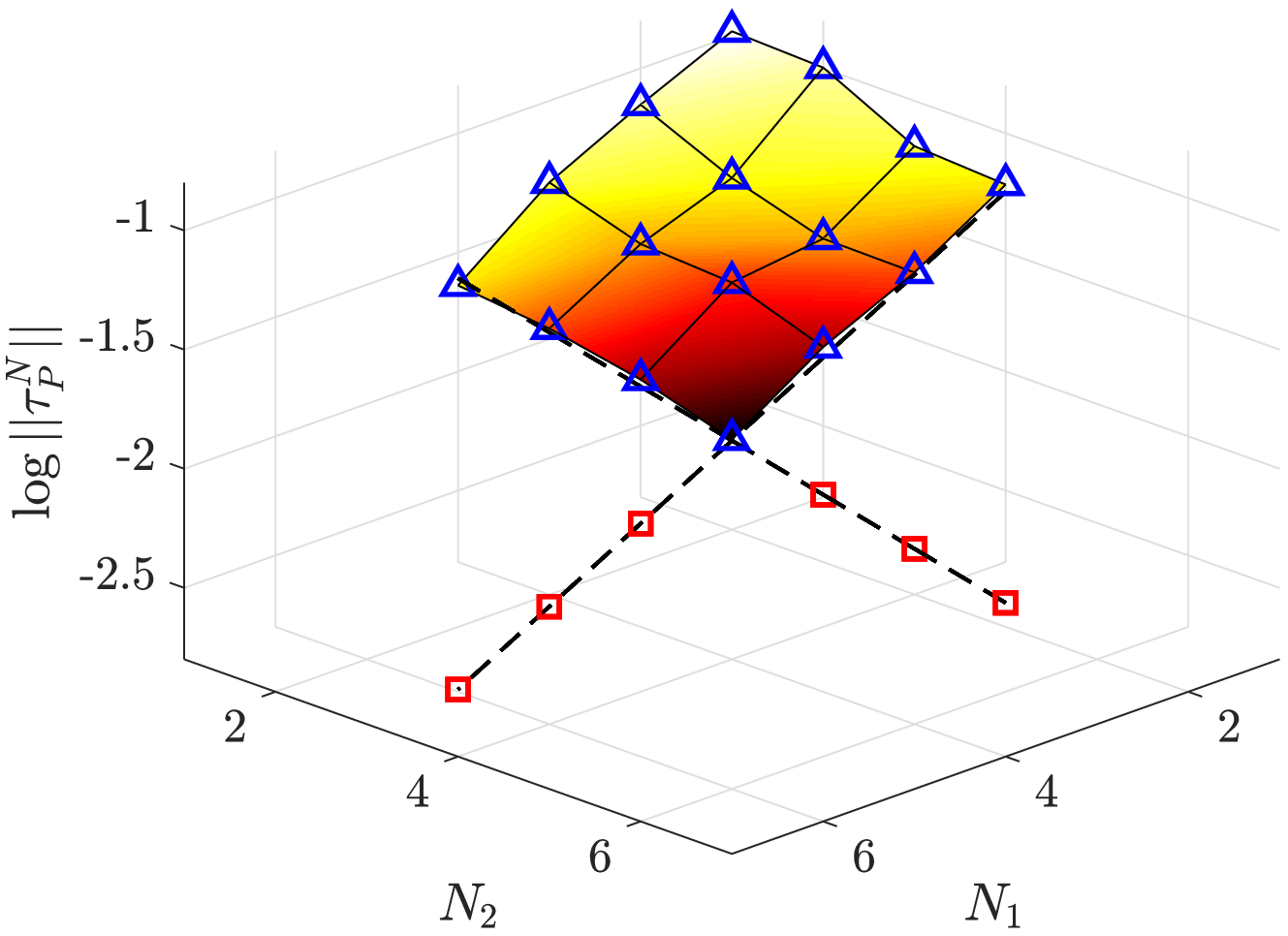}}

\caption{One- and two-dimensional truncation error maps constructed with $P=5$ showing estimated and extrapolated values for a toy problem (illustrative)}\label{fig:TruncErrorMapKompen}
\end{center}
\end{figure} 

Kompenhans et al. \cite{Kompenhans2016} used the estimated truncation error map to adapt the polynomial orders of a given mesh using a specified maximum permitted error threshold, $\tau_{max}$. The method for estimating the map consists of four steps:

\begin{enumerate}
\item Generate an \textit{inner} map for $N_i<P_i$ using equation \ref{eq:TruncError2D!}. This requires $n_{eval}$ evaluations of the discrete partial differential operator $\mathcal{R}^N$, where
\begin{equation}
n_{eval} = \prod_{i=1}^d (P_i - 1).
\end{equation}
The estimated points of the \textit{inner} map are marked as blue triangles in Figure \ref{fig:TruncErrorMapKompen}.

\item Use the \textit{inner} map to look for a combination of polynomial orders that fulfills the specified error threshold. If a combination fulfills $\tau_{max}$, adapt the polynomial order and exit the adaptation process. Otherwise, additional considerations are required.

\item Compute $\log ||\tau^{N_iN_j}||$ and perform a linear regression analysis in the direction $i$ in order to describe the behavior of $\log \norm{\tau}$ as a function of $N_i$ ($N_j=P_j-1$). The result of the linear regression is marked with a dashed line in Figure \ref{fig:TruncErrorMapKompen}.

\item Use the linear regression to estimate the truncation error for $N_i \ge P_i$, and select the value of $N_1$ and $N_2$ independently from these extrapolations. The extrapolated values of the truncation error are marked with red squares in Figure \ref{fig:TruncErrorMapKompen}. 

\end{enumerate}

This procedure is performed for every element in all the Cartesian directions to adapt the mesh. For further details, refer to the original paper by Kompenhans et al. \cite{Kompenhans2016} and to our example in section \ref{sec:ComparisonKompen}.

\subsubsection{Analysis of the method}
Two remarks can be made about the described four-step procedure:

\begin{Remark} \label{rem:AssumptionKompen}
Steps 3 and 4 assume that the spectral convergence observed in 1D extends to higher dimensions along iso-$N_i$ lines of the truncation error map.
\end{Remark}

\begin{Remark} \label{rem:TruncErrorReq}
For the \textit{non-isolated} truncation error, the behavior shown in Figure \ref{fig:TruncErrorMapKompen} can only be expected for the \textit{locally-generated} component (see equation \ref{eq:TruncErrorAnisConv}). This means that the extrapolation procedure may predict unexpected behaviors if the truncation error in neighboring elements is high, i.e., if the $\tau$-estimation procedure is not performed element-wise while keeping the polynomial order in other elements \textit{sufficiently}\footnote{Sufficiently high does not necessarily mean that the polynomial order of the other elements must be kept in $P$, but that it must be high enough so that the \textit{externally-generated} contributions to the truncation error are less than the \textit{internally-generated} ones.} high. 
\end{Remark}

As stated in the remark \ref{rem:AssumptionKompen}, the extrapolation procedure assumes that the truncation error decreases exponentially along iso-$N_i$ lines of the truncation error map. That is the same as saying that the truncation error map is a plane for $d=2$, and in general that it is a hyperplane of dimension $d+1$. In Figure \ref{fig:HyperPlane} we present an illustration that resembles the hyperplane behavior in two dimensions for perfect spectral convergence. The described methodology consists in constructing $d$ iso-$N_i$ lines on the hyperplane, which should contain the values of the truncation error for $N_j=P_j-1 \ \forall \ N_i$ (red line with triangular markers and black line with circular markers in Figure \ref{fig:HyperPlane}). In that scenario, selecting $N_i$ independently can be regarded as a conservative criterion, since in the hyperplane we have:

\begin{itemize}
\item In 2D:
\begin{align*}
\tau^{N_1N_2} &\le \tau^{P_1N_2}, \\
\tau^{N_1N_2} &\le \tau^{N_1P_2}.
\end{align*}

\item In 3D:
\begin{align*}
\tau^{N_1N_2N_3} &\le \tau^{P_1N_2N_3}, \\
\tau^{N_1N_2N_3} &\le \tau^{N_1P_2N_3}, \\
\tau^{N_1N_2N_3} &\le \tau^{N_1N_2P_3}.
\end{align*}

\end{itemize}

for $N_i \ge P_i$. See Figure \ref{fig:HyperPlane}.\\

\begin{figure}[h!]
\begin{center}
\includegraphics[width=0.45\textwidth]{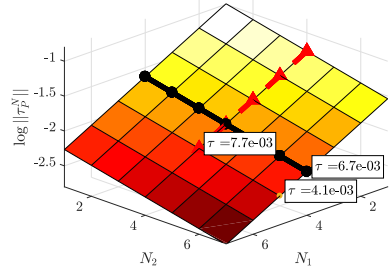}
\caption{Hyperplane behavior of the truncation error of a toy problem (illustrative)}\label{fig:HyperPlane}
\end{center}
\end{figure}

Hereinafter, the method by Kompenhans et al. shall be referred to as the \textit{low order extrapolation method}, since it supposes that the truncation error map ($\log||\tau||$) has a linear behavior. In light of the analysis in section \ref{sec:NewTechnique}, we will be able to formulate a \textit{high order extrapolation method} that provides extrapolated estimations with increased accuracy.

\section{New anisotropic truncation error estimation}\label{sec:NewTechnique}

In this section, we present the new anisotropic truncation error estimator, discuss some of its properties and compare them with the error estimators that have been used in the literature for performing anisotropic p-adaptation. The formulation of this new estimator involved a mathematical proof based on specific assumptions that is detailed in section \ref{sec:AnisTauEstimation}. In section \ref{sec:AnisConv}, we analyze the convergence behavior of the anisotropic estimator. In section \ref{sec:NewPolOrders}, we describe how the new estimator can be used for approximating the truncation error of higher-order representations, and prove that it is superior to existing $\tau$-estimators at obtaining these approximations. \\

\subsection{Anisotropic $\tau-$estimation} \label{sec:AnisTauEstimation}
The anisotropic $\tau$-estimator is a generalization of the ideas reviewed in section \ref{sec:TauEstim} and is based on four assumptions that are explained first. For the sake of readability and without loss of generality, all the mathematical statements in this section are for 2D formulations, where $N=(N_{\xi},N_{\eta}) = (N_1,N_2)$ are the polynomial orders in the 2 directions of the reference element. However, all the statements and proofs can be directly generalized to $d$ dimensions.\\

\subsubsection*{\textbf{Assumptions}}
Following assumptions are a consequence of the tensor product basis functions of the DGSEM and hold for sufficiently smooth solutions in the asymptotic range. The assumptions are:

\begin{enumerate}[label=(\alph*)]
\item The truncation error has an anisotropic behavior and, therefore, can be  decoupled in its directional components:
	\begin{equation} \label{eq:TruncErrorAnisBehav}
	\tau^{N_1N_2} \approx \tau_1^{N_1N_2} + \tau_2^{N_1N_2}.
	\end{equation}
	Here, it is important to note that $\tau_i$ is the projection of the global truncation error, $\tau$, into the local direction, $i$.
\item The \textit{locally-generated} truncation error in each direction depends only on the polynomial order in that direction:
	\begin{equation} \label{eq:TruncErrorAnisBehavior}
	\tau_{\Omega,i}^{N_1N_2} \approx \tau_{\Omega,i}^{N_1N_2}(N_i)
	\end{equation}
\end{enumerate}

Assumptions (a) and (b) follow from the work of Rubio et al. \cite{Rubio2015,rubio2015truncation}. Furthermore, assumption (b) relates to the anisotropic spectral convergence behavior of the truncation error (equations \ref{eq:TruncErrorAnisConv} and  \ref{eq:IsolTruncErrorAnisConv}).

\begin{Theorem} \label{theo:NewTauEstimator}
The truncation error of a DGSEM discretization of order $(N_1,N_2)$ can be approximated from a \textit{semi}-converged solution of order $(P_1,P_2)$, such that $P_i > N_i$, as the sum of the directional $\tau$-estimations obtained by coarsening in the different space dimensions:

\begin{equation}\label{eq:NewTauEstimator}
\tau^{N_1N_2} \approx \tau^{N_1P_2}_{P_1P_2} +  \tau^{P_1N_2}_{P_1P_2}
\end{equation}
\end{Theorem}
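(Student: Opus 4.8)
The plan is to combine the two structural assumptions (a) and (b) with the definition of the $\tau$-estimator in equation \ref{eq:TruncError!} and the directional decoupling of the interpolation error. First I would start from the anisotropic decomposition \ref{eq:TruncErrorAnisBehav}, writing the target quantity as $\tau^{N_1N_2} \approx \tau_1^{N_1N_2} + \tau_2^{N_1N_2}$, and similarly decompose each of the two $\tau$-estimations on the right-hand side of \ref{eq:NewTauEstimator}:
\begin{equation}
\tau^{N_1P_2}_{P_1P_2} \approx \tau_1^{N_1P_2} + \tau_2^{N_1P_2}, \qquad
\tau^{P_1N_2}_{P_1P_2} \approx \tau_1^{P_1N_2} + \tau_2^{P_1N_2}.
\end{equation}
The goal is then to show that the sum of these four directional pieces reduces to $\tau_1^{N_1N_2} + \tau_2^{N_1N_2}$ up to the asymptotic order of the assumptions.

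The key step is to invoke assumption (b), equation \ref{eq:TruncErrorAnisBehavior}, which says the locally-generated directional component $\tau_{\Omega,i}$ depends only on $N_i$. Hence $\tau_{\Omega,1}^{N_1P_2} \approx \tau_{\Omega,1}^{N_1N_2}$ (both have first index $N_1$), and $\tau_{\Omega,2}^{P_1N_2} \approx \tau_{\Omega,2}^{N_1N_2}$ (both have second index $N_2$). The remaining cross terms $\tau_{\Omega,2}^{N_1P_2}$ and $\tau_{\Omega,1}^{P_1N_2}$ are evaluated at the reference order $P_i$ in their active direction; since $P_i > N_i$ and the directional truncation error decays spectrally in that index (equations \ref{eq:TruncErrorAnisConv}, \ref{eq:IsolTruncErrorAnisConv}), these two terms are exponentially smaller than the retained ones and are absorbed into the approximation. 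For the \textit{non-isolated} case one must additionally argue, via Remark \ref{rem:TruncErrorReq} and equation \ref{eq:TruncErrorAnisConv}, that the \textit{externally-generated} contribution $\tau_{\partial\Omega^e}$ is the same in all three evaluations (it does not depend on the local polynomial orders), so it appears once on each side and cancels; this is where the hypothesis that neighbor orders are kept sufficiently high is used. Collecting the surviving terms yields exactly $\tau_1^{N_1N_2} + \tau_2^{N_1N_2} \approx \tau^{N_1N_2}$, which is the claim. In $d$ dimensions the same bookkeeping applies: one sums the $d$ estimations obtained by coarsening one direction at a time, each directional component is picked up from the unique estimation that carries the correct index, and the $\binom{d}{1}(d-1)$ cross terms are spectrally negligible.

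The main obstacle I anticipate is making the cancellation of the cross terms and the external contribution rigorous rather than heuristic — i.e., controlling the error of the approximation sign in \ref{eq:NewTauEstimator} in terms of the $e^{-P_i\eta_i}$ tails and the $\mathcal{O}((\epsilon^N)^2)$ remainder from the Taylor expansion \ref{eq:TruncErrorDiscError}. A clean way to handle this is to phrase everything in terms of the flux interpolation error: by equation \ref{eq:IsolTruncErrorInterpol} the isolated $\tau$ is, to leading order, a discrete inner product of $\nabla\cdot\varepsilon^N_{\mathscr{F}}$, and the interpolation error of a tensor-product expansion splits exactly as in \ref{eq:DirectionalInterpolError} into directional pieces each depending on a single $N_i$; coarsening only direction $i$ from $P$ to $N_i$ changes only the $i$-th piece of $\varepsilon^N_{\mathscr{F}}$. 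Summing over $i$ then reconstructs the full directional interpolation error (the reference-order pieces contributing only $e^{-P_i\eta_i}$-size remainders), and one reads off \ref{eq:NewTauEstimator}. I would present the argument at this level of detail, flagging that a fully quantitative error bound would require the constants in \ref{eq:AnisInterpErrorConvergence} and is left to the numerical validation in section \ref{sec:Validation}.
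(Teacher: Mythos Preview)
Your proposal is correct and reaches the result, but your primary line of argument differs from the paper's in an instructive way. You work top-down from assumptions (a) and (b): decompose both $\tau^{N_1N_2}$ and each $\tau$-estimate into directional components, match the dominant pieces via (b), and discard the cross terms $\tau_2^{N_1P_2}$, $\tau_1^{P_1N_2}$ as spectrally small in $P_i$. The paper instead works bottom-up from the concrete representation in equation \ref{eq:IsolTruncErrorInterpol}: it writes out the $\tau$-estimate $\hat\tau^{N_1P_2}_{P_1P_2}$ explicitly in terms of the flux interpolation error plus $\epsilon^{P_1P_2}$ and quadrature remainders, shows that coarsening only in direction $1$ leaves $\varepsilon_{\mathscr{F},2}^{N_1P_2}=\varepsilon_{\mathscr{F},2}^{P_1P_2}$ so that the leading term is exactly $(\nabla\cdot\varepsilon_{\mathscr{F},1}^{N_1P_2},\phi)^{N_1P_2}$ (equation \ref{eq:IsolTauDirectional}), and then sums the directions against the decomposition \ref{eq:IsolTauDecoupled}. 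This is precisely the ``clean way'' you sketch in your final paragraph. Your top-down route is shorter and makes the role of (a)--(b) transparent, but it tacitly assumes that the estimate $\tau^{N_1P_2}_{P_1P_2}$ is close enough to the true $\tau^{N_1P_2}$ for assumption (a) to transfer to it; the paper's route avoids this by never invoking (a) on the estimate, and as a by-product identifies the neglected remainder terms explicitly (the $\epsilon^{P_1P_2}$ and $e_{\int}^{N_1P_2}$ pieces in equation \ref{eq:DirectionalIsolTau2}).

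One small correction on the non-isolated case: the external contribution $\tau_{\partial\Omega^e}$ does not ``appear once on each side and cancel.'' Under the paper's assumption (e) in Appendix \ref{sec:ProofNonIsolated}, the two $\tau$-estimates on the right contain \emph{no} external contribution at all, because the neighbours are held at order $P$ so that $\epsilon^{\bunderline{N}}-\epsilon^{\bunderline{P}}\approx 0$ and $\epsilon_{\partial\Omega}^{N}-\epsilon_{\partial\Omega}^{P}\approx 0$; what the sum reconstructs is therefore only the \emph{locally-generated} part of $\tau^{N_1N_2}$, and the external part on the left is simply declared negligible. The bookkeeping is ``absent on the right, small on the left,'' not a cancellation---and if you added it to both summands you would double-count it.
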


\begin{proof}

This proof is specific for the \textit{isolated} truncation error. We refer to Appendix \ref{sec:ProofNonIsolated} for a brief proof that Theorem \ref{theo:NewTauEstimator} also holds for the \textit{non-isolated} truncation error under additional assumptions. \\

Let us note that assumptions (a) and (b) are consistent with the dependence of the \textit{isolated} truncation error on the interpolation error (equation \ref{eq:IsolTruncErrorInterpol}) and the anisotropic behavior of the latter (equation \ref{eq:AnisInterpErrorConvergence}).\\

We start by obtaining the analytical expression for the \textit{isolated} $\tau$-estimation. To that end, we use the same procedure as in Appendix \ref{sec:IsolTruncError}. The estimate of the \textit{isolated} truncation error in the DGSEM can be expressed for any basis function $\phi$ in an element $e$ as

\begin{equation}\label{eq:IsolTauEstim0}
\hat \tau^N_P \bigr\rvert_{\Omega^e} = \hat{\mathcal{R}} (\mathbf{I}^N \bar{\mathbf{q}}^P) = \int^N_{\Omega^e} \mathbf{s}^N \phi d\Omega + \int^N_{\Omega^e} \mathscr{F}^N (\bar{\mathbf{q}}^P) \cdot \nabla \phi d \Omega - \int^N_{\partial \Omega^e} \mathscr{F}^N (\bar{\mathbf{q}}^P) \cdot \mathbf{n}  \phi d \sigma,
\end{equation}

In this case, instead of the exact solution to the problem, $\bar{\mathbf{q}}$, we use a solution on a higher order mesh, $\bar{\mathbf{q}}^P$. Therefore, using the definition of interpolation error (equation \ref{eq:InterpError}) and discretization error (equation \ref{eq:DiscError}), the flux is

\begin{equation} \label{eq:IsolTauEstim1}
\mathscr{F}^N = \mathbf{I}^N \mathscr{F}(\bar{\mathbf{q}}^P) = \mathscr{F}(\bar{\mathbf{q}}^P) - \varepsilon^N_{\mathscr{F}} = \mathscr{F}(\bar{\mathbf{q}}) + \frac{\partial \mathscr{F}}{\partial \mathbf{q}} \biggr\rvert_{\bar{\mathbf{q}}} {\epsilon^P} - \varepsilon^N_{\mathscr{F}} + \mathcal{O} \left( {(\epsilon^P)^2} \right),
\end{equation}
and the source term is
\begin{equation}\label{eq:IsolTauEstim2}
\mathbf{s}^N = \mathbf{I}^N \mathbf{s} = \mathbf{s} - \varepsilon^N_{\mathbf{s}}.
\end{equation}

Inserting equations \ref{eq:IsolTauEstim1} and \ref{eq:IsolTauEstim2} into \ref{eq:IsolTauEstim0}, and integrating by parts we obtain

\begin{equation} \label{eq:IsolTauEstim}
\hat \tau_{P}^{N} \biggr\rvert_{\Omega^e} = \left( \nabla \cdot \varepsilon^{N}_{\mathscr{F}} , \phi \right)^{N}_{\Omega^e} + \left( \nabla \cdot \frac{\partial \mathscr{F}}{\partial \mathbf{q}} \biggr\rvert_{\bar{\mathbf{q}}} \epsilon^{P} , \phi \right)^{N}_{\Omega^e}  + \mathcal{O} \left( (\epsilon^{P})^2 \right) + \mathcal{O} \left( e_{\int}^{N} \right).
\end{equation}

Remark that although the \textit{isolated} truncation error of an element does not depend on external sources, its approximation by $\tau$-estimation is affected by the discretization error of the reference mesh, $\epsilon^{P}$. This translates into a weak influence of external (upwind) errors transmitted through the Riemann solver. In the two-dimensional case and coarsening in only one direction (here the direction $(1)$), equation \ref{eq:IsolTauEstim} becomes

\begin{multline} \label{eq:DirectionalIsolTau1}
\hat \tau_{P_1P_2}^{N_1P_2} \biggr\rvert_{\Omega^e} = \left( \nabla \cdot \varepsilon^{N_1P_2}_{\mathscr{F}} , \phi \right)^{N_1P_2}_{\Omega^e} + \left( \nabla \cdot \frac{\partial \mathscr{F}}{\partial \mathbf{q}} \biggr\rvert_{\bar{\mathbf{q}}} \epsilon^{P_1P_2} , \phi \right)^{N_1P_2}_{\Omega^e}  \\
+ \mathcal{O} \left( (\epsilon^{P_1P_2})^2 \right) + \mathcal{O} \left( e_{\int}^{N_1P_2} \right).
\end{multline}

Now, we rewrite equation \ref{eq:DirectionalIsolTau1} decoupling the interpolation error in directional components and taking into account that $\varepsilon_2^{N_1P_2} = \varepsilon_2^{P_1P_2}$ (equation \ref{eq:DirectionalInterpolError}):

\begin{multline} \label{eq:DirectionalIsolTau2}
\hat \tau_{P_1P_2}^{N_1P_2} = \left( \nabla \cdot \varepsilon^{N_1P_2}_{\mathscr{F},1} , \phi \right)^{N_1P_2}_{\Omega^e} + \left( \nabla \cdot \varepsilon^{P_1P_2}_{\mathscr{F},2} , \phi \right)^{N_1P_2}_{\Omega^e} + \\
\left( \nabla \cdot \frac{\partial \mathscr{F}}{\partial \mathbf{q}} \biggr\rvert_{\bar{\mathbf{q}}} \epsilon^{P_1P_2} , \phi \right)^{N_1P_2}_{\Omega^e}  + \mathcal{O} \left( (\epsilon^{P_1P_2})^2 \right) + \mathcal{O} \left( e_{\int}^{N_1P_2} \right).
\end{multline}

Notice that all terms on the right-hand side, except for the first one and the quadrature error, are of the order of errors on the higher-order mesh. Therefore, and taking into account that we are coarsening in the direction $(1)$, for sufficiently smooth solutions we can expect the first term on the right-hand side to be the leading term. Simplifying, the directional $\hat \tau$-estimation provides

\begin{equation} \label{eq:IsolTauDirectional}
\hat \tau_{P_1P_2}^{N_1P_2} \approx \left( \nabla \cdot \varepsilon^{N_1P_2}_{\mathscr{F},1} , \phi \right)^{N_1P_2}_{\Omega^e}.
\end{equation}

On the other hand, inserting equation \ref{eq:DirectionalInterpolError} into \ref{eq:IsolTruncErrorInterpol}, the \textit{isolated} truncation error of a representation of order $N=(N_1,N_2)$ yields

\begin{equation} \label{eq:IsolTauDecoupled}
\hat \tau^{N_1N_2} \bigr\rvert_{\Omega^e} \approx \sum_{i=1}^d \left( \nabla \cdot \varepsilon^{N_1N_2}_{\mathscr{F},i} , \phi \right)^{N_iN_j}_{\Omega^e}.
\end{equation} 

Notice, again, that the directional components of the interpolation error only depend on the polynomial order in the corresponding direction (equation \ref{eq:DirectionalInterpolError}). Therefore, neglecting additional quadrature errors, we recover equation \ref{eq:NewTauEstimator} for the \textit{isolated} truncation error by combining equations \ref{eq:IsolTauDirectional} and \ref{eq:IsolTauDecoupled}:

\begin{equation}
\hat \tau^{N_1N_2} \approx \hat \tau^{N_1P_2}_{P_1P_2} + \hat \tau^{P_1N_2}_{P_1P_2}.
\end{equation}
\end{proof}

From the previous analysis we can conclude that, when the $\tau$-estimation method is performed coarsening only in the direction $i$, the result is an approximation to the $i$-directional component of the truncation error, $\tau_i$. We can also arrive at this conclusion intuitively if we realize that $\mathbf{Q}^{P_1P_2}$ cannot be better than $\mathbf{Q}^{N_1P_2}$ at describing the solution in the direction $(2)$.\\

Theorem \ref{theo:NewTauEstimator} can be easily generalized to three dimensions to obtain

\begin{align*}
\tau^{N_1N_2N_3} &\approx \tau_1^{N_1N_2N_3} + \tau_2^{N_1N_2N_3} + \tau_3^{N_1N_2N_3} \\
\tau^{N_1N_2N_3} &\approx \tau_{P_1P_2P_3}^{N_1P_2P_3} + \tau_{P_1P_2P_3}^{P_1N_2P_3} + \tau_{P_1P_2P_3}^{P_1P_2N_3}.
\end{align*}

\subsection{Convergence behavior of the anisotropic truncation error} \label{sec:AnisConv}

In this section we analyze the convergence properties of the truncation error map using Theorem \ref{theo:NewTauEstimator}. Let us first consider the directional components of the truncation error.


\begin{Theorem} \label{theo:AnisTruncErrorConvergence}
The directional components of the \textit{locally-generated} truncation error exhibit spectral convergence with respect to the polynomial order in the corresponding direction:
\begin{equation} \label{eq:AnisTruncErrorConvergence}
\norm{\tau_{\Omega^e,i}} \le  C^e_{i} e^{-N^e_{i} \eta^e_{i}} 
\end{equation}
\end{Theorem}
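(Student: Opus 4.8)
The plan is to read off the directional component $\tau_{\Omega^e,i}$ directly from the analysis carried out in the proof of Theorem \ref{theo:NewTauEstimator}, and then to estimate it with classical spectral approximation theory. For the \emph{isolated} truncation error there is no externally-generated contribution (cf.\ the remark after equation \ref{eq:IsolTruncErrorAnisConv}), so $\hat\tau_{\Omega^e,i} = \hat\tau_i$, and combining equations \ref{eq:IsolTauDirectional} and \ref{eq:IsolTauDecoupled} identifies it, up to $\mathcal{O}(e_{\int}^{N})$ and terms of the order of errors on the reference mesh, with the single inner product $\left( \nabla \cdot \varepsilon^{N}_{\mathscr{F},i}, \phi \right)^{N}_{\Omega^e}$. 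Thus the whole statement reduces to showing that this inner product decays exponentially in $N_i$.

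First I would pass from the coefficient-vector representation of $\tau_{\Omega^e,i}$ to a function norm by invoking the norm equivalence induced by the elemental mass matrix $[\mathbf{M}]^e$ — whose constants depend on the element geometry and the metric terms but not essentially on the polynomial degree — together with a Cauchy--Schwarz estimate on the discrete inner product, giving $\norm{\tau_{\Omega^e,i}} \le \tilde C^e\, \norm{\nabla\cdot\varepsilon^{N}_{\mathscr{F},i}\bigr\rvert_{\Omega^e}}$. Next I would bound the right-hand side: by assumption (b) and equation \ref{eq:DirectionalInterpolError} applied to the flux, $\varepsilon^{N}_{\mathscr{F},i}$ depends only on $N_i$; by the anisotropic interpolation bound \ref{eq:AnisInterpErrorConvergence} applied to the smooth flux $\mathscr{F}(\bar{\mathbf{q}})$, its $i$-th directional component decays like $C^e_{0,i}e^{-N^e_i\eta^e_{0,i}}$; and since the divergence operator contributes at most one differentiation, the standard spectral estimates for derivatives of interpolants \cite{canuto2012spectral,hesthaven2007nodal} guarantee that $\nabla\cdot\varepsilon^{N}_{\mathscr{F},i}$ still decays exponentially in $N_i$, with a modified constant and a reduced but strictly positive rate. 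Folding the (fixed) metric terms and quadrature weights of the untouched direction into the constant yields $\norm{\nabla\cdot\varepsilon^{N}_{\mathscr{F},i}\bigr\rvert_{\Omega^e}} \le \hat C^e_i e^{-N^e_i\hat\eta^e_i}$.

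Collecting $\tilde C^e \hat C^e_i$ into $C^e_i$ and $\hat\eta^e_i$ into $\eta^e_i$, and absorbing the spectrally small quadrature remainder $\mathcal{O}(e_{\int}^{N})$ and the higher-order $\epsilon^{P}$ terms into the estimate, gives $\norm{\tau_{\Omega^e,i}} \le C^e_i e^{-N^e_i\eta^e_i}$, which is the claim. For the \emph{non-isolated} truncation error the identical argument applies to the \emph{locally-generated} part once the \emph{externally-generated} contribution $\norm{\tau_{\partial\Omega^e}}$ is split off exactly as in equation \ref{eq:TruncErrorAnisConv}; that term does not depend on the local polynomial orders and so lies outside the directional bound.

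The main obstacle I anticipate is controlling the \emph{derivative} of the directional flux-interpolation error — establishing that $\nabla\cdot\varepsilon^{N}_{\mathscr{F},i}$ retains exponential decay in $N_i$ and, crucially for an \emph{anisotropic} statement, that the constant and rate do not depend on the polynomial orders in the other directions. The first point is classical; the second rests entirely on the tensor-product structure of the DGSEM basis and quadrature, since varying $N_i$ alone leaves every factor acting in the remaining directions unchanged, so they enter only as bounded multiplicative constants. A secondary, bookkeeping-type difficulty is making the mass-matrix norm-equivalence constants genuinely robust in $N$ and checking that the neglected $\mathcal{O}((\epsilon^{P})^2)$ and quadrature remainders do not spoil the exponential decay — both are covered by the smoothness and asymptotic-range hypotheses under which the assumptions of this section were stated.
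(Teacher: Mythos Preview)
Your argument is sound, but it takes a substantially different route from the paper. The paper's proof is a two-line matching argument: it invokes assumption (b) --- that each directional component $\tau_{\Omega,i}$ depends only on $N_i$ --- and simply reads off the bound term by term from the already-stated anisotropic convergence estimates \ref{eq:TruncErrorAnisConv} and \ref{eq:IsolTruncErrorAnisConv}. Since the total (locally-generated) truncation error is bounded by $\sum_i C^e_i e^{-N^e_i\eta^e_i}$ and assumption (a) decomposes it as $\sum_i \tau_{\Omega,i}$ with each summand depending on a single $N_i$, identifying terms gives \ref{eq:AnisTruncErrorConvergence} directly. No interpolation-error analysis, no norm equivalence, no derivative estimates are needed.

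Your approach instead derives the exponential decay from first principles: you identify $\tau_{\Omega^e,i}$ with the inner product $\left(\nabla\cdot\varepsilon^N_{\mathscr{F},i},\phi\right)^N_{\Omega^e}$ via the machinery of Theorem \ref{theo:NewTauEstimator}, and then push classical spectral approximation theory through the divergence and the discrete inner product. This is more work but is also more self-contained --- it does not rely on the bounds \ref{eq:TruncErrorAnisConv} and \ref{eq:IsolTruncErrorAnisConv}, which in the paper are themselves taken from prior work and experimental validation rather than proven. The paper's argument is cleaner and more modular; yours actually explains \emph{why} the directional components decay spectrally, and in particular makes explicit the role of the tensor-product structure in keeping the constants independent of the other $N_j$, something the paper's proof leaves entirely inside assumption (b).
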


\begin{proof}
According to assumption $(b)$ (equation \ref{eq:TruncErrorAnisBehavior}), each directional component of the \textit{locally-generated} truncation error, $\tau_{\Omega,i}$, depends solely on the polynomial order in the corresponding direction, $N_i$. If we insert equation \ref{eq:TruncErrorAnisBehavior} into equation \ref{eq:IsolTruncErrorAnisConv} and analyze the dependencies term by term, we recover \ref{eq:AnisTruncErrorConvergence} for the \textit{isolated} truncation error. In the same way, if we insert equation \ref{eq:TruncErrorAnisBehavior} into \ref{eq:TruncErrorAnisConv}, we recover \ref{eq:AnisTruncErrorConvergence} for the \textit{non-isolated} truncation error.\\
\end{proof}


Now, we are able to analyze the convergence behavior along lines of the truncation error map, where a line in $d$ dimensions is defined as:

\begin{equation}
a_1N_1+a_2N_2+\cdots+a_dN_d=b, 
\end{equation}
with $a_i,b \in {\rm I\!R}$.

\begin{Theorem} \label{theo:NotSpectralOnLines}
The total truncation error does not necessarily decrease exponentially along lines of the truncation error map.

\end{Theorem}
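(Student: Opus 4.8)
The plan is to prove this by exhibiting a concrete counterexample: a truncation error map that satisfies Theorem \ref{theo:AnisTruncErrorConvergence} (spectral decay of each directional component in its own direction) but whose total error, when sampled along a line, fails to look like a straight line on the $\log\norm{\tau}$ plot. The underlying reason is that a sum of two exponentials with different rates is not itself an exponential in general. First I would use Theorem \ref{theo:NewTauEstimator} together with assumption (a) to write the total truncation error in an element as $\norm{\tau^{N_1N_2}} \approx \norm{\tau_1^{N_1N_2} + \tau_2^{N_1N_2}}$, and then bound it (or, more usefully for a counterexample, model it) by $C_1 e^{-N_1\eta_1} + C_2 e^{-N_2\eta_2}$ using Theorem \ref{theo:AnisTruncErrorConvergence}. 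The key observation is that taking $\log$ of a sum of two exponentials with $\eta_1 \neq \eta_2$ does not yield an affine function of $(N_1,N_2)$.

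Next I would specialize to a line in the map, say the anti-diagonal $N_1 + N_2 = b$ with $b$ fixed, parametrized by $N_1 = t$, $N_2 = b - t$. Along this line the (model for the) total error reads $f(t) = C_1 e^{-\eta_1 t} + C_2 e^{-\eta_2(b-t)}$. If the map were linear along this line, $\log f(t)$ would be affine in $t$, i.e. $f$ would be a single exponential $Ce^{\beta t}$. But $f$ here is a sum of two exponentials with exponents $-\eta_1$ and $+\eta_2$ respectively; when $\eta_1 \neq \eta_2$ (and both $C_i \neq 0$), $f$ is genuinely a combination of two distinct exponential modes, and $\log f$ is strictly convex (it has a single interior minimum where the two terms balance), hence not affine. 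I would make this quantitative by computing $\frac{d^2}{dt^2}\log f(t)$ and showing it is strictly positive, or simply by noting that $\log f$ interpolated at three collinear points in the map is not collinear. This establishes that the truncation error map is in general \emph{not} a hyperplane, contradicting the implicit assumption behind the low order extrapolation method of section \ref{sec:OldPolOrders} (Remark \ref{rem:AssumptionKompen}).

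I would then close the loop by connecting back to the p-adaptation discussion: the iso-$N_i$ lines used by Kompenhans et al. are a special (degenerate) case — along an iso-$N_2$ line only $\tau_1$ varies, so the linear behavior \emph{does} hold there, which is why their method works for interpolation but underestimates for extrapolation along general directions and, crucially, for the combined refinement that an honest error threshold $\tau_{max}$ requires. So the statement is really: the linear behavior is a property of iso-$N_i$ lines, not of arbitrary lines, and Theorem \ref{theo:NewTauEstimator} plus Theorem \ref{theo:AnisTruncErrorConvergence} explain exactly why.

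The main obstacle I anticipate is not the algebra — that $\log(C_1 e^{-\eta_1 t} + C_2 e^{-\eta_2(b-t)})$ is convex rather than affine is elementary — but rather making the argument rigorous at the level of the \emph{bounds} rather than exact expressions. Theorems \ref{theo:AnisTruncErrorConvergence} gives only upper bounds $\norm{\tau_{\Omega,i}} \le C_i e^{-N_i\eta_i}$, not equalities, and moreover there is a sign/cancellation issue since $\tau_1$ and $\tau_2$ are projections that could partially cancel. To be careful I would phrase the theorem as a "does not necessarily" statement — i.e. it suffices to produce one admissible scenario (consistent with all earlier assumptions and bounds) in which linearity fails — and then the two-exponential model, which is consistent with Theorems \ref{theo:AnisTruncErrorConvergence} and \ref{theo:NewTauEstimator} and with the numerically observed behavior cited from \cite{Rubio2015,rubio2015truncation}, does the job. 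The honest caveat, which I would state explicitly, is that in the asymptotic regime where one direction dominates, the map \emph{does} approach linearity along most lines; the failure is genuine but is most pronounced in the pre-asymptotic/balanced regime, which is precisely the regime relevant for aggressive coarsening of the reference mesh.
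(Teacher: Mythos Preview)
Your core mechanism---that a sum of two exponentials is not itself an exponential, so $\log\norm{\tau}$ is not affine along a generic line---is correct and is the same idea the paper uses. Your anti-diagonal counterexample is valid for the ``does not necessarily'' statement.

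However, your third paragraph contains a substantive error that contradicts the paper's actual point. You write that iso-$N_i$ lines are a ``degenerate case'' where ``the linear behavior \emph{does} hold,'' because ``along an iso-$N_2$ line only $\tau_1$ varies.'' But this is precisely backwards. Along an iso-$N_1$ line (fix $N_1=\bar N_1$, vary $N_2$), assumption~(b) gives $\tau_1 \approx \text{const}$ with $\norm{\tau_1}=C$, while $\tau_2$ decays exponentially in $N_2$. Hence $\norm{\tau^{\bar N_1 N_2}} \lesssim C + \norm{\tau_2}$: for $N_2$ small this tracks $\norm{\tau_2}$ (slope $\eta_2$), but for $N_2$ large it saturates at the constant $C$ (slope $0$). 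The iso-$N_i$ line is therefore \emph{not} straight---and this is exactly the counterexample the paper gives. The whole purpose of Theorem~\ref{theo:NotSpectralOnLines} in context is to show that the low-order extrapolation of Kompenhans et al., which \emph{is} performed along iso-$N_i$ lines (Remark~\ref{rem:AssumptionKompen}), is not justified. Your conclusion that their method ``works for interpolation'' along iso-$N_i$ lines because linearity holds there misreads the situation: it works only in the regime where one directional component dominates, and fails (underestimates) once the fixed-direction component $C$ becomes the floor. Fixing this paragraph would bring your argument in line with the paper's; as written, it asserts the opposite of what the theorem is there to establish.
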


\begin{proof}
The corresponding positive statement can be easily proven wrong with a counterexample. Let us consider a 2D anisotropic representation. From theorem \ref{theo:AnisTruncErrorConvergence}, we know that the truncation error of each directional component in an element, $\tau_i^N$, decreases exponentially when increasing $N_i$; and that the decreasing rate is $\eta_i$, a constant that depends on the smoothness of the solution in the direction $i$. Let us suppose that for a certain element in a mesh, the directional components of the error have the same value for a specific combination of polynomial orders $(\bar N_1, \bar N_2)$ in a certain norm:

\begin{equation} \label{eq:lines1}
\norm{\tau_1^{\bar N_1\bar N_2}} = \norm{\tau_2^{\bar N_1\bar N_2}} = C.
\end{equation}

Let us analyze the convergence rate along an iso-$N_i$ line of the truncation error map with constant $N_1 = \bar N_1$, i.e. the convergence rate of $\tau^{\bar N_1N_2}$ as a function of $N_2$. Note that, according to assumption (b), along the line we have
\begin{align} \label{eq:lines1.5}
\tau_1^{\bar N_1N_2} &\approx \tau_1^{\bar N_1 \bar N_2} \nonumber\\
\norm{\tau_1^{\bar N_1N_2}} &\approx \norm{\tau_1^{\bar N_1 \bar N_2}} = C.
\end{align}

Furthermore, assumption (a) states that the total truncation error along the line of the map is:

\begin{equation} \label{eq:lines1.7}
\tau^{\bar N_1N_2} \approx \tau_1^{\bar N_1N_2} + \tau_2^{\bar N_1N_2}.
\end{equation}

Substituting equation \ref{eq:lines1.5} into \ref{eq:lines1.7} and using the triangle inequality yields:

\begin{equation} \label{eq:lines2}
\norm{\tau^{\bar N_1N_2}} \le C + \norm{\tau_2^{\bar N_1N_2}}.
\end{equation}

Remember that the truncation error map is defined as the dependence of $\log \norm{\tau^{N_1N_2}}$ on $(N_1,N_2)$ (definition \ref{Def:TruncErrorMap}). Therefore, taking logarithms in both sides and rearranging, equation \ref{eq:lines2} can be rewritten in two equivalent forms:
\begin{eqnarray}
i) & &
\log \norm{\tau^{\bar N_1N_2}} \le \log \norm{\tau_2^{\bar N_1N_2}} + \log \left( 1 + \frac{ C }  {\norm{\tau_2^{\bar N_1N_2}} } \right), \label{eq:linesLowN2} \\
ii) & &
\log \norm{\tau^{\bar N_1N_2}} \le \log (C) + \log \left( 1 + \frac{ \norm{\tau_2^{\bar N_1N_2}} }  { C } \right) \label{eq:linesHighN2}.
\end{eqnarray}

For $N_2 \ll \bar N_2$, the second term on the right-hand side of equation \ref{eq:linesLowN2} vanishes, which indicates that the convergence rate of the truncation error along an iso-$N_i$ line of constant $N_1 = \bar N_1$ tends to $\eta_2$. On the other hand, for $N_2 \gg \bar N_2$, the second term on the right-hand side of equation \ref{eq:linesHighN2} vanishes, which implies that the convergence rate along an iso-$N_i$ line of constant $N_1 = \bar N_1$ tends to zero, since the  truncation error is bounded by $\log (C)$, a constant that does not depend on $N_2$. In other words, the iso-$N_i$ line on the hyperplane for $N_1=\bar N_1$ is not a straight line.\\
\end{proof}

\subsection{High order extrapolation of the truncation error estimations}\label{sec:NewPolOrders}

In this section, we present a procedure for extrapolating the truncation error estimations (\textit{inner} map) that can be obtained by applying Theorem \ref{theo:NewTauEstimator}. Since Theorem \ref{theo:NotSpectralOnLines} rules out the possibility of extrapolating along iso-$N_i$ lines of the truncation error map, we take advantage of the anisotropic behavior of the truncation error (equation \ref{eq:TruncErrorAnisBehav}) and the spectral convergence of its directional components (Theorem \ref{theo:AnisTruncErrorConvergence}). The proposed methodology, which is valid for both the \textit{isolated} and \textit{non-isolated} truncation error, can be summarized in three steps:

\begin{enumerate}
\item Perform anisotropic coarsening to obtain $\tau_i$ (Theorem \ref{theo:NewTauEstimator}) and construct the \textit{inner} truncation error map directly. In $d$ dimensions, this requires only $n^{new}_{eval}$ evaluations of the discrete partial differential operator $\mathcal{R}^N$, where
\begin{equation}
n^{new}_{eval} = \sum_{i=1}^d (P_i-1).
\end{equation}

\item Compute $\log \norm{\tau_i}$ and perform a linear regression analysis in the direction $i$ in order to describe the behavior of $\log \norm{\tau_i}$ as a function of $N_i$. This is supported on the proved spectral behavior of the directional components of the truncation error (Theorem \ref{theo:AnisTruncErrorConvergence}).

\item Construct the \textit{outer} truncation error map using equation \ref{eq:NewTauEstimator} and the extrapolated values of $\log \norm{\tau_i}$:
\begin{equation}
\log \norm{\tau} = \log \norm{\sum_{i=1}^d \tau_i}.
\end{equation}
\end{enumerate}

An example is provided in section \ref{sec:Validation}.

\subsection{Theoretical comparison of the new anisotropic $\tau$-estimation with previous approaches} \label{sec:TheoComparisonMaps}

Figure \ref{fig:AnisTauBehavior} illustrates the theoretically predicted behavior of the truncation error map that is obtained with the new anisotropic $\tau$-estimation method for a toy problem (only illustrative). As noted in the proof of Theorem \ref{theo:NotSpectralOnLines}, remark that along an iso-$N_i$ line, the truncation error firstly decays exponentially for low $N_{j \ne i}$ and then tends asymptotically to a constant value for high $N_{j \ne i}$. As can be seen, contrary to the \textit{low-order extrapolation}, the new anisotropic $\tau$-estimation does not assume that the truncation error map has a linear behavior. That is the reason why it is called \textit{high-order extrapolation}. Figure \ref{fig:Overlap} shows a comparison of the hyperplane behavior with the one obtained using the new anisotropic $\tau$-estimation method. As can be seen, the hyperplane tends to underpredict the truncation error for some combinations of polynomial orders as compared to the new truncation error estimator. A comparison of the output of both estimation methods with the exact truncation error in a test case is provided in section \ref{sec:ComparisonKompen}.\\

\begin{figure}[h!]
\begin{center}

\subfigure[New anisotropic $\tau$-estimation method]{\label{fig:AnisTauBehavior} \includegraphics[width=0.45\textwidth]{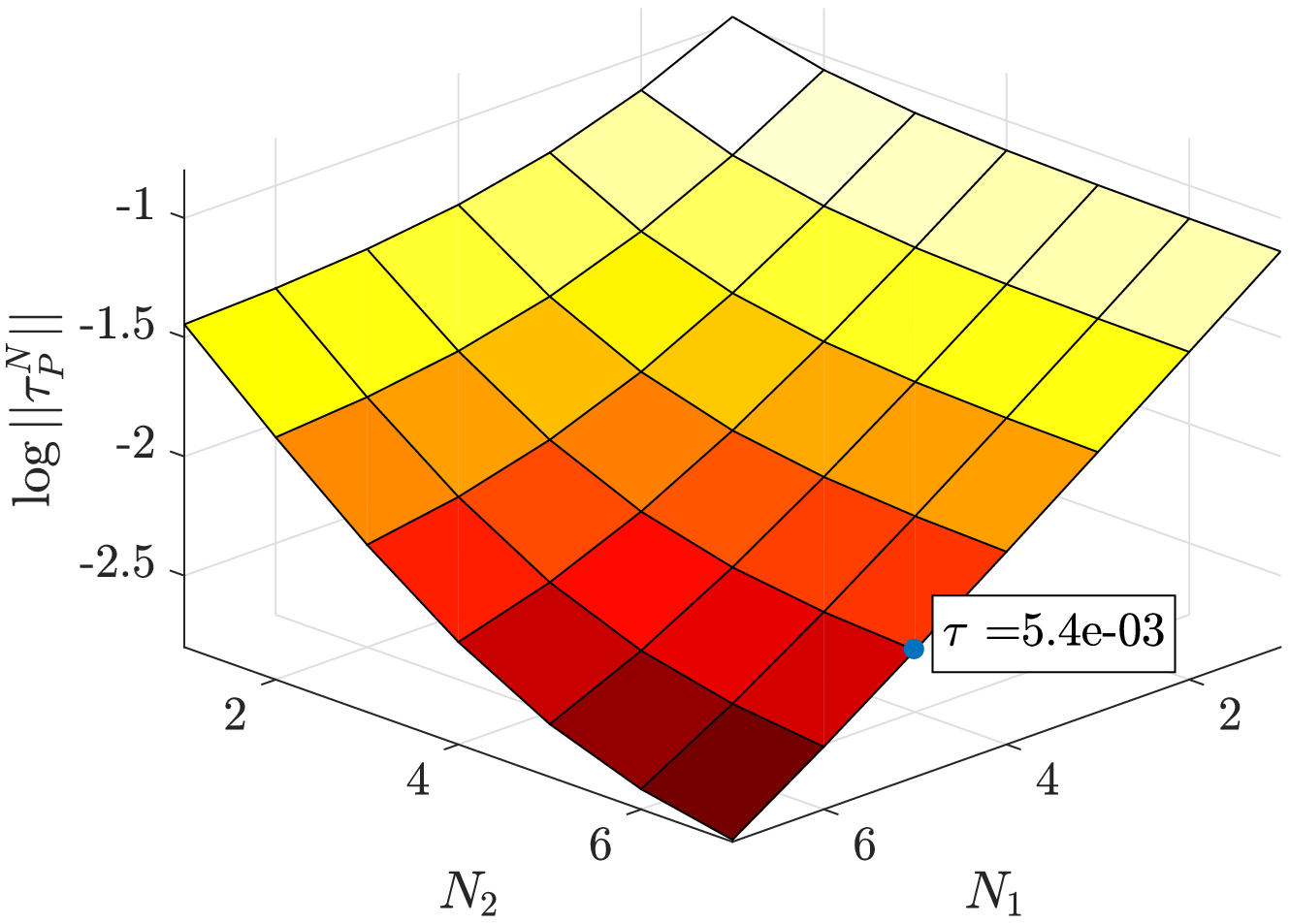}}\qquad
\subfigure[Hyperplane overlapped with new anisotropic $\tau$-estimation method]{\label{fig:Overlap} \includegraphics[width=0.45\textwidth]{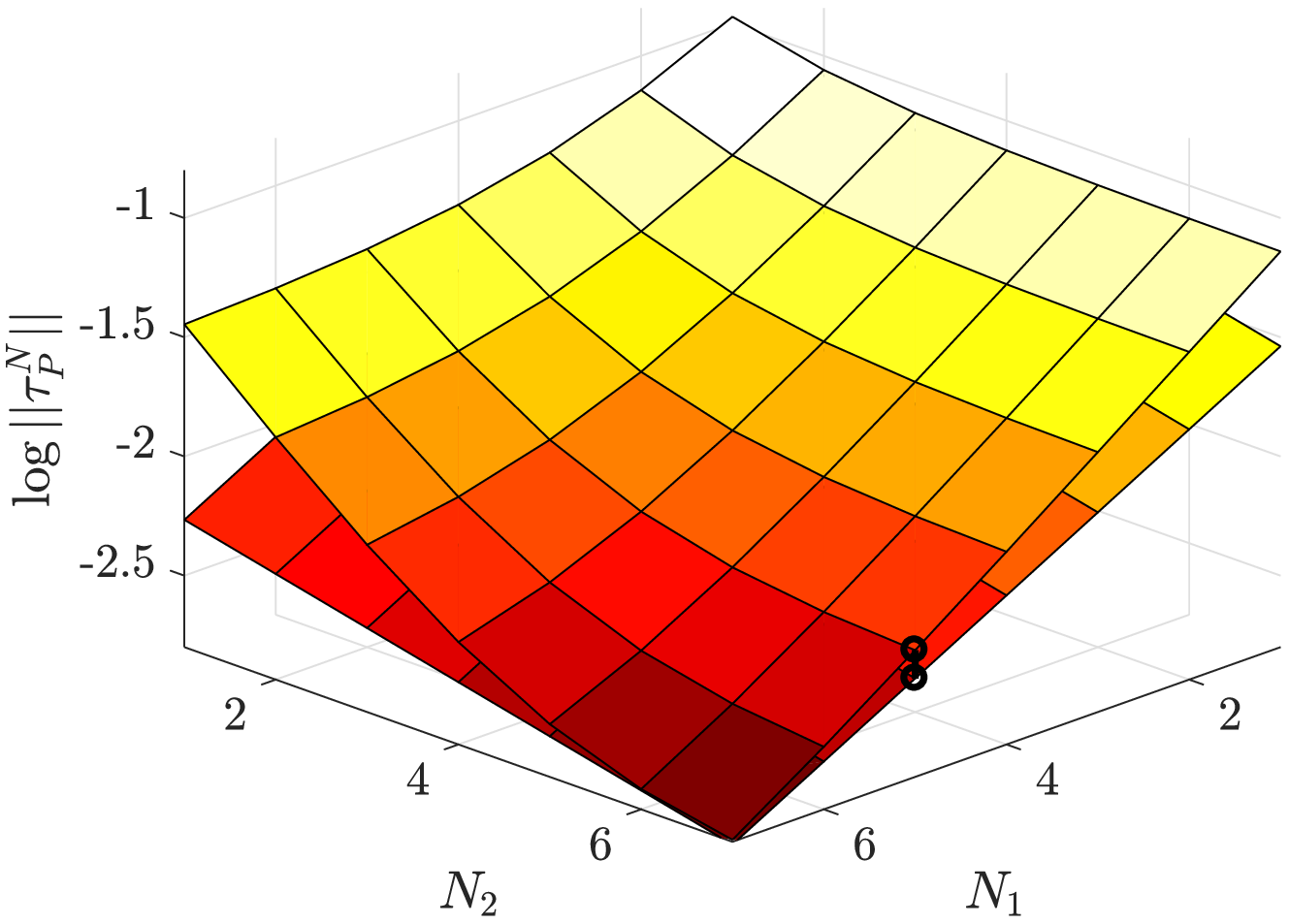}}
\caption{Spatial representation of two-dimensional anisotropic truncation error maps: behavior predicted by the new anisotropic $\tau$-estimation  method (a), hyperplane behavior (Figure \ref{fig:HyperPlane}) overlapped with the values predicted by the new anisotropic $\tau$-estimation method (b)} \label{fig:Surfaces}

\end{center}
\end{figure}

It is noteworthy that even though the method of Kompenhans et al. \cite{Kompenhans2016} supposes hyperplane behavior, their strategy of selecting the polynomial order in every direction independently minimizes the error involved in the \textit{low order extrapolation} (compare the values of $\tau$ in Figures \ref{fig:AnisTauBehavior} and \ref{fig:HyperPlane}). Furthermore, remark that for $N_1 \gg N_2$ and $N_2 \gg N_1$ the new anisotropic estimation tends to have a hyperplane behavior. In fact, Kompenhans et al. \cite[section~5.1]{Kompenhans2016} state that only the values of the truncation error where $N_1 \gg N_2$ or $N_2 \gg N_1$ should be used for the least square fitting. The high-order extrapolation can be seen as a form of bypassing this requirement.\\

Table \ref{tab:Comparison} provides a final summary comparing the new estimation method and the previous methodology by Kompenhans et al. \cite{Kompenhans2016}.

\begin{table}[h]
\caption{Comparison of anisotropic $\tau$-estimation methods in $d$ dimensions performed with a reference mesh of order $(P_1, \cdots, P_d)$. $\mathcal{R}^N$ is the discrete partial differential operator.}
\label{tab:Comparison} 
\begin{tabular}{p{3cm}cc}
\hline\noalign{\smallskip}
Feature & Kompenhans et al. \cite{Kompenhans2016}  & Proposed $\tau$-estimation\\
\noalign{\smallskip}\hline\noalign{\smallskip}
Number of evaluations of $\mathcal{R}^N$ for inner map & 
\(\displaystyle
\prod_{i=1}^d (P_i-1)
\)
 &   
$\displaystyle \sum_{i=1}^d (P_i-1)$ \\

Accuracy of inner map & Very good & Good\\
Accuracy of outer map & Poor & Good \\
\noalign{\smallskip}\hline
\end{tabular}
\end{table}

\section{Validation of the anisotropic $\tau$-estimation method} \label{sec:Validation}

The compressible Navier-Stokes equations can be written in conservative form (see Appendix \ref{sec:NS}) and discretized using the DGSEM, as explained in section \ref{sec:DGSEM}. In order to test the accuracy of the proposed $\tau$-estimation method, a 2D manufactured solutions test case is analyzed. The exact solution selected for the problem is
\begin{align} \label{eq:ManufacturedSolution}
\rho &= p = e^{-5 \left( 4(x-\frac{1}{2})^2 + (y-\frac{1}{2})^2  \right)} + 1, \nonumber \\
u    &= v = 1,
\end{align}
which is simulated in the unit square, as depicted in Figure \ref{fig:ValidationRho}. Inserting equation \ref{eq:ManufacturedSolution} into \ref{eq:NSeq}, the source term for the 2D compressible Navier-Stokes equations yields

\begin{equation}
\mathbf{s} = 
\begin{bmatrix}
s_{\rho} \\ s_{\rho u} \\ s_{\rho v} \\ s_{\rho e}
\end{bmatrix} =
\begin{bmatrix}

40 \left(x-\frac{1}{2}\right) + 10 \left(y-\frac{1}{2}\right) \\

80 \left(x-\frac{1}{2}\right) + 10 \left(y-\frac{1}{2}\right) \\

40 \left(x-\frac{1}{2}\right) + 20 \left(y-\frac{1}{2}\right) \\

\left[40 \left(x-\frac{1}{2}\right) + 10 \left(y-\frac{1}{2}\right) \right]
\left[\frac{1}{\gamma -1}+2\right] 

\end{bmatrix} e^{5 \left(4 (x-\frac{1}{2})^2+(y-\frac{1}{2})^2\right)}.
\end{equation}

\begin{figure}[h]
\begin{center}
\includegraphics[width=0.4\textwidth]{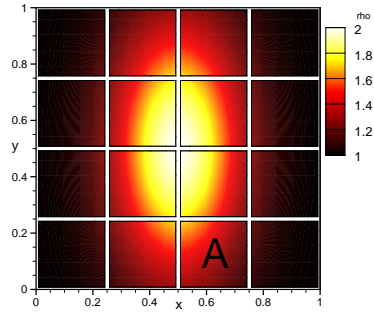}
\caption{Density ($\rho$) contours for the proposed manufactured solutions test case}\label{fig:ValidationRho}
\end{center}
\end{figure}

The main interest is to validate the proposed error estimator and compare its outcome with previous works. Since the method of Kompenhans et al. \cite{Kompenhans2016} explained in section \ref{sec:OldPolOrders} was formulated and used with the \textit{non-isolated} truncation error, the results that are shown in sections \ref{sec:TEmaps} and \ref{sec:ComparisonKompen} were obtained for the \textit{non-isolated} truncation error estimator ($\tau_P^N$). However, similar results can be obtained for the \textit{isolated} truncation error, since its maps exhibit the same behavior as the ones presented here. In addition, in section \ref{sec:ResIsolTruncError} we compare the truncation error estimator with the \textit{isolated} truncation error estimator when used for driving a p-adaptation procedure.

\subsection{Truncation error maps and number of degrees of freedom}\label{sec:TEmaps}
A fully time-converged solution ($||\mathcal{R}^P||_\infty<10^{-10}$) of order 5 ($P=P_1=P_2=5$) is used to estimate the truncation error using the method of section \ref{sec:AnisTauEstimation}. The results for element \textit{A} are depicted in Figure \ref{fig:ValidationTauEstim}. Figure \ref{fig:ValidationTauEx} shows the exact truncation error. It can be seen that the proposed method predicts a truncation error map that is very similar to the exact one, even for extrapolated values. Hence, in agreement with the obtained results, the assumptions of section \ref{sec:AnisTauEstimation} are reasonable. These maps can be used for selecting an appropriate combination of polynomial orders such that a maximum truncation error $\tau_{max}$ is achieved employing a minimum number of degrees of freedom. \\

Figure \ref{fig:ValidationDOFs} shows the map of the number degrees of freedom (DOFs) for every ($N_1$,$N_2$)-combination. The polynomial orders that achieve a truncation error $\tau<\tau_{max}$ are marked with black squares. Let us remark that, although these results are not exactly the same for the estimated and exact truncation error maps, they are very similar. Therefore, we conclude that the proposed estimation method may be used for adaptation purposes. Notice that there are many alternatives that produce a truncation error in the desired range, but there is only one that minimizes the number of degrees of freedom and, therefore, the computational cost.\\
\begin{figure}[h!]
\begin{center}

\subfigure[Estimated $\tau^N_5$ with $P_1=P_2=5$]{\label{fig:ValidationTauEstim} \includegraphics[width=0.4\textwidth]{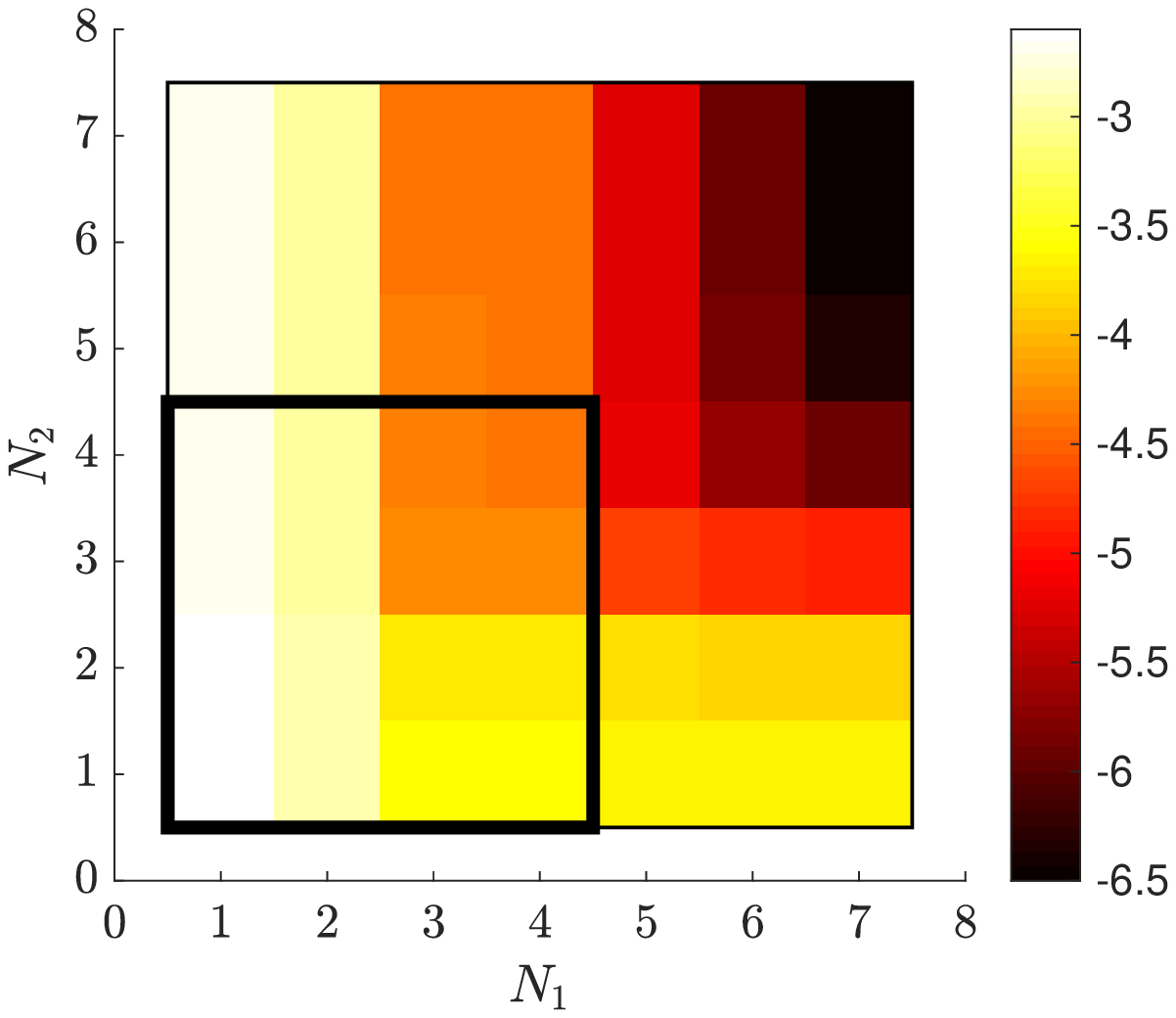}}\qquad
\subfigure[Exact truncation error]{\label{fig:ValidationTauEx} \includegraphics[width=0.4\textwidth]{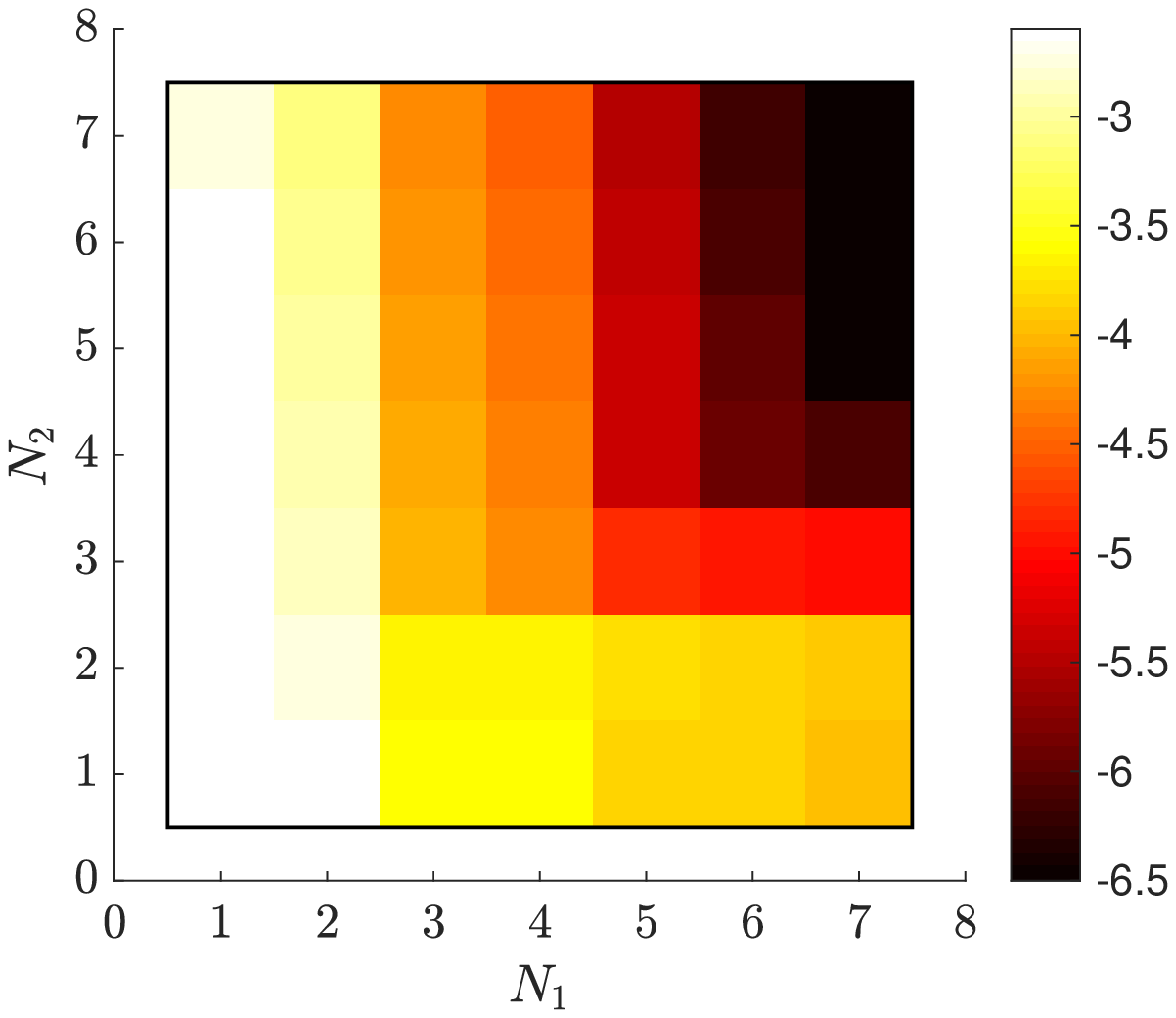}}

\caption{Truncation error estimation (a) and exact values (b) for different polynomial order combinations in element A (logarithmic scale). Outside the black box (a) are the extrapolated values of the estimated truncation error} \label{fig:ValidationTau}

\vspace*{\floatsep}

\subfigure[$\tau_{max}=10^{-4}$ (estimation).]{\label{fig:ValidationDOFEstim10_4} \includegraphics[width=0.4\textwidth]{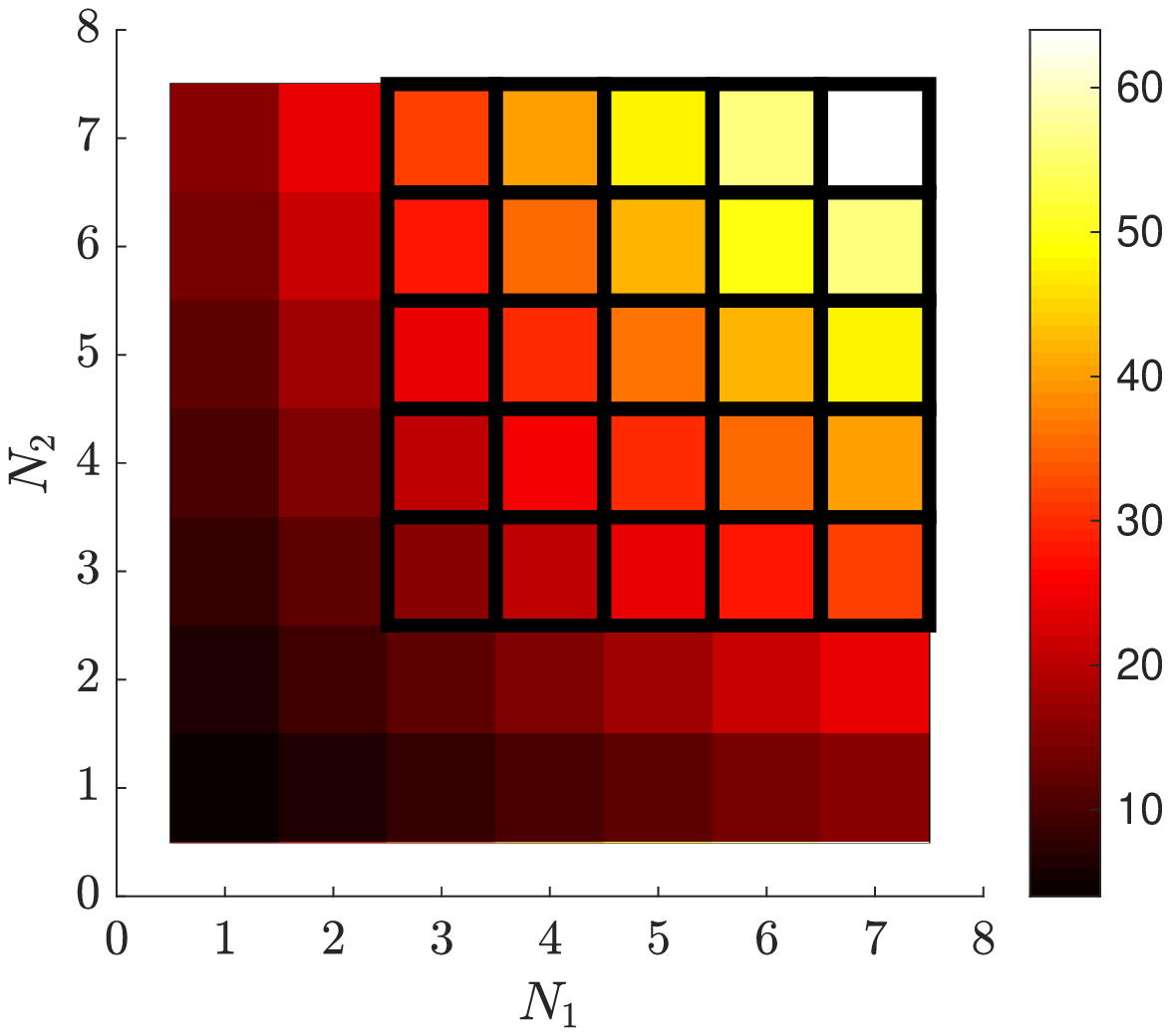}}\qquad
\subfigure[$\tau_{max}=10^{-4}$ (exact).]{\label{fig:ValidationDOFEx10_4} \includegraphics[width=0.4\textwidth]{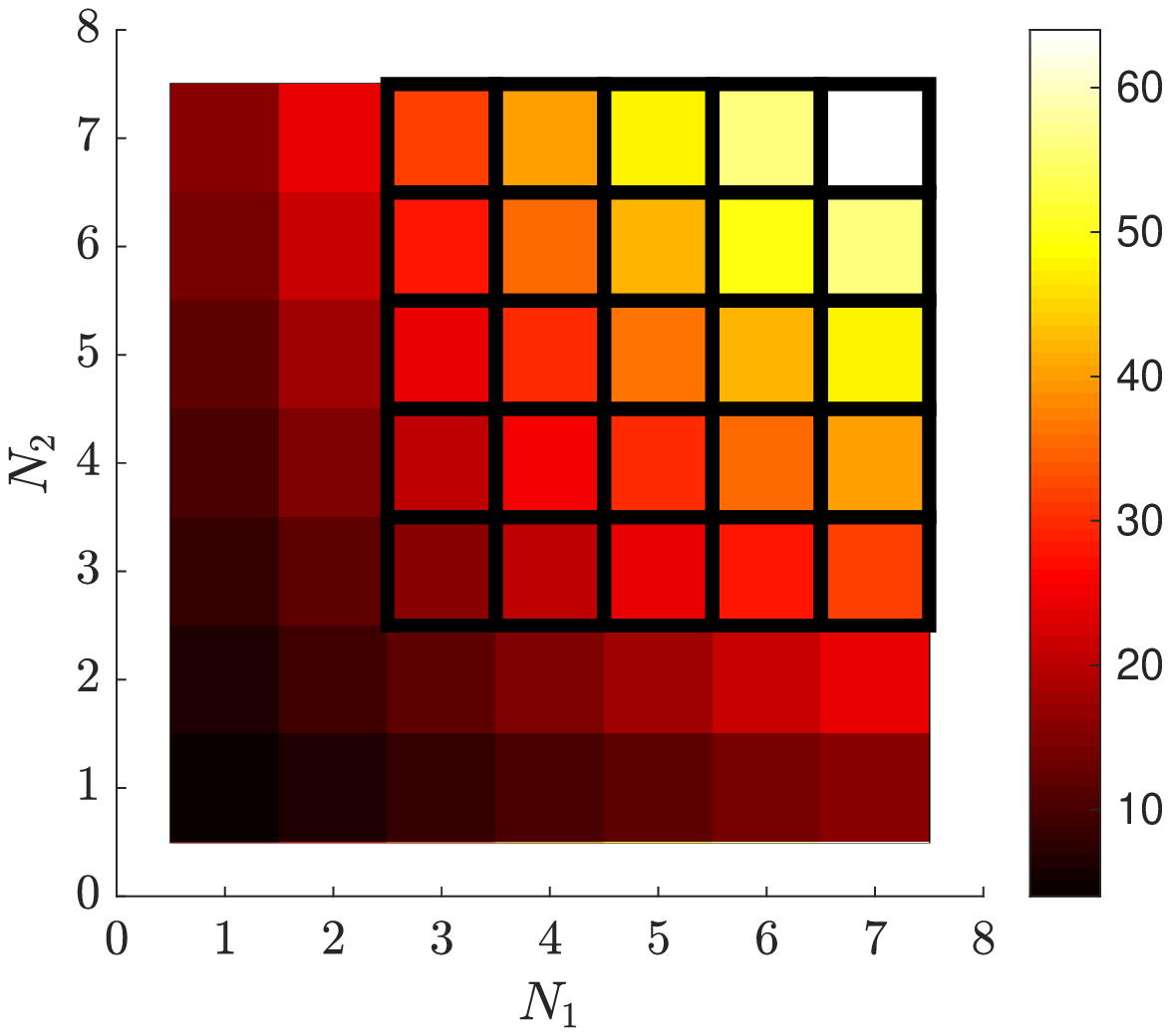}}\qquad

\subfigure[$\tau_{max}=10^{-5}$ (estimation).]{\label{fig:ValidationDOFEstim10_5} \includegraphics[width=0.4\textwidth]{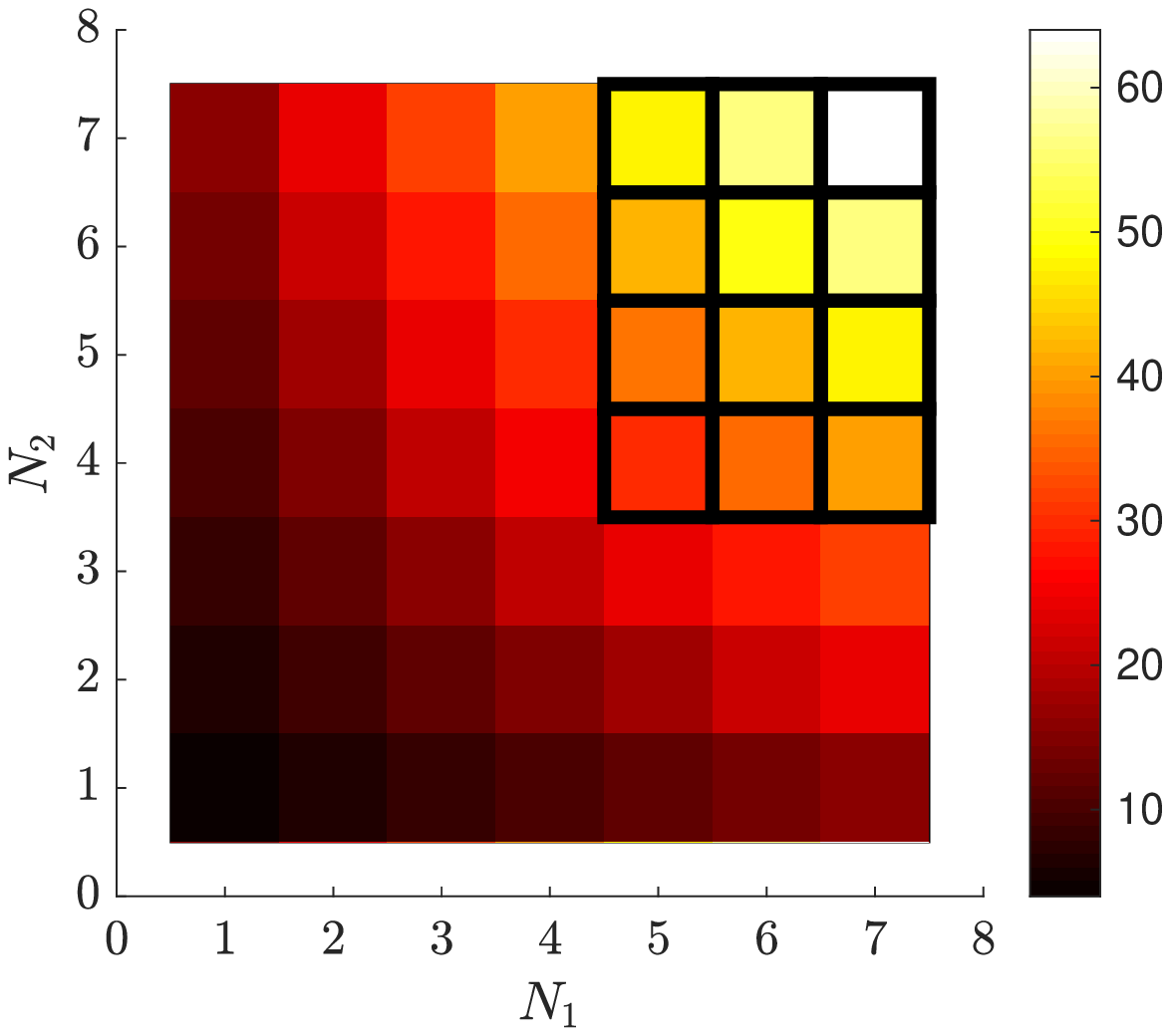}}\qquad
\subfigure[$\tau_{max}=10^{-5}$ (exact).]{\label{fig:ValidationDOFEx10_5} \includegraphics[width=0.4\textwidth]{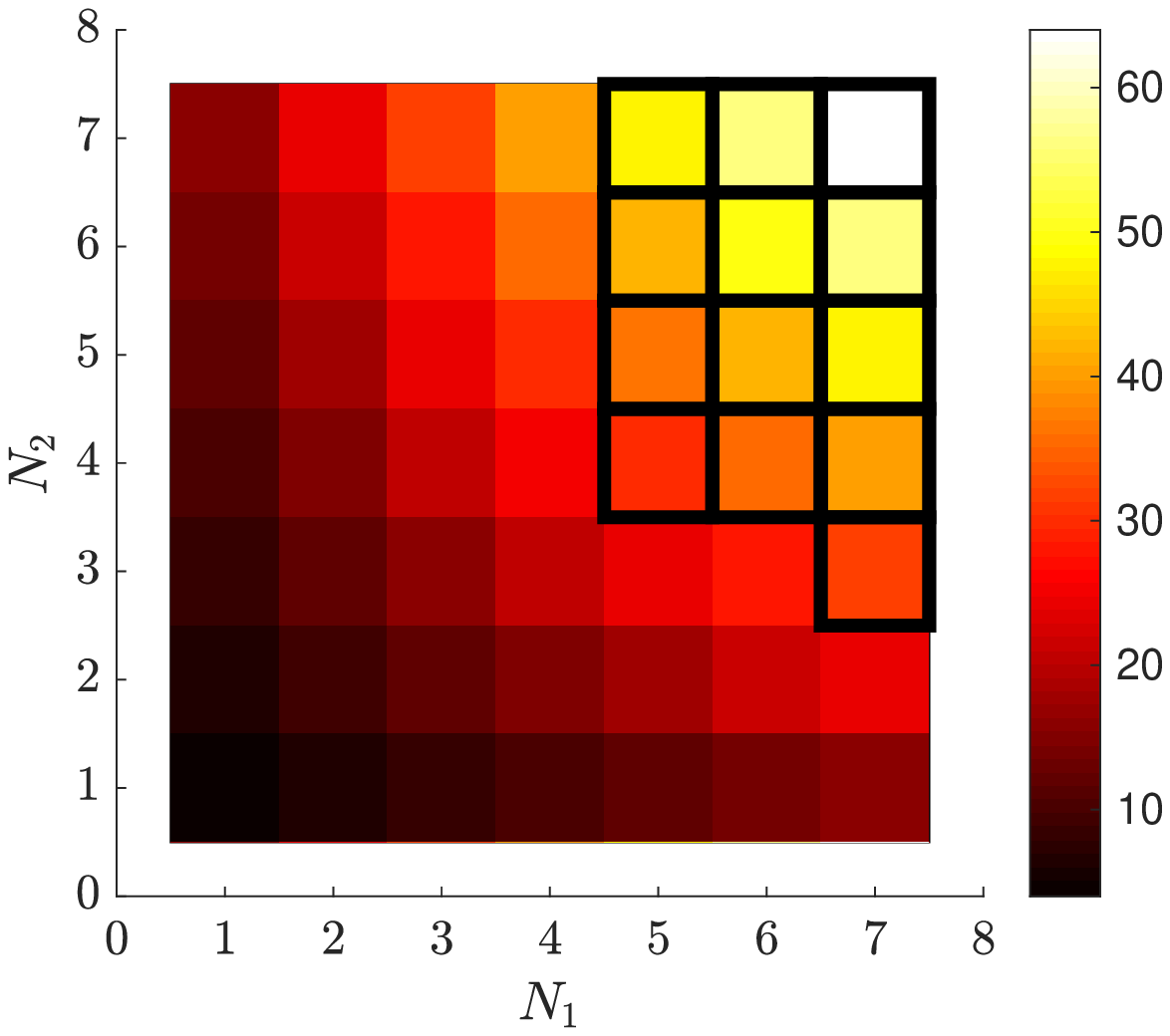}}\qquad

\caption{Contour of the number of degrees of freedom for every polynomial order considered for performing p-adaptation. The combinations $(N_1,N_2)$ that fulfill the $\tau_{max}$ threshold are marked with black squares} \label{fig:ValidationDOFs}
\end{center}
\end{figure}

\subsection{Comparison with previous methodologies}\label{sec:ComparisonKompen}

Figure \ref{fig:Res_Surfaces} shows the 3D representation of the exact truncation error map (a), the one obtained with the \textit{high-order extrapolation} (b), and the one obtained with the \textit{low-order extrapolation} (c) -here, we illustrate the complete hyperplane. The maps were generated with the same fully time-converged solution of order $P_1=P_2=5$. As can be seen, the truncation error map generated with the \textit{high-order extrapolation} bears close resemblance to the exact one, whereas the hyperplane underpredicts the truncation error in some regions, as anticipated in section \ref{sec:TheoComparisonMaps}.\\

\begin{figure}[h!]
\begin{center}

\subfigure[Exact truncation error map]{\label{fig:Res_TEmapEx} \includegraphics[width=0.45\textwidth]{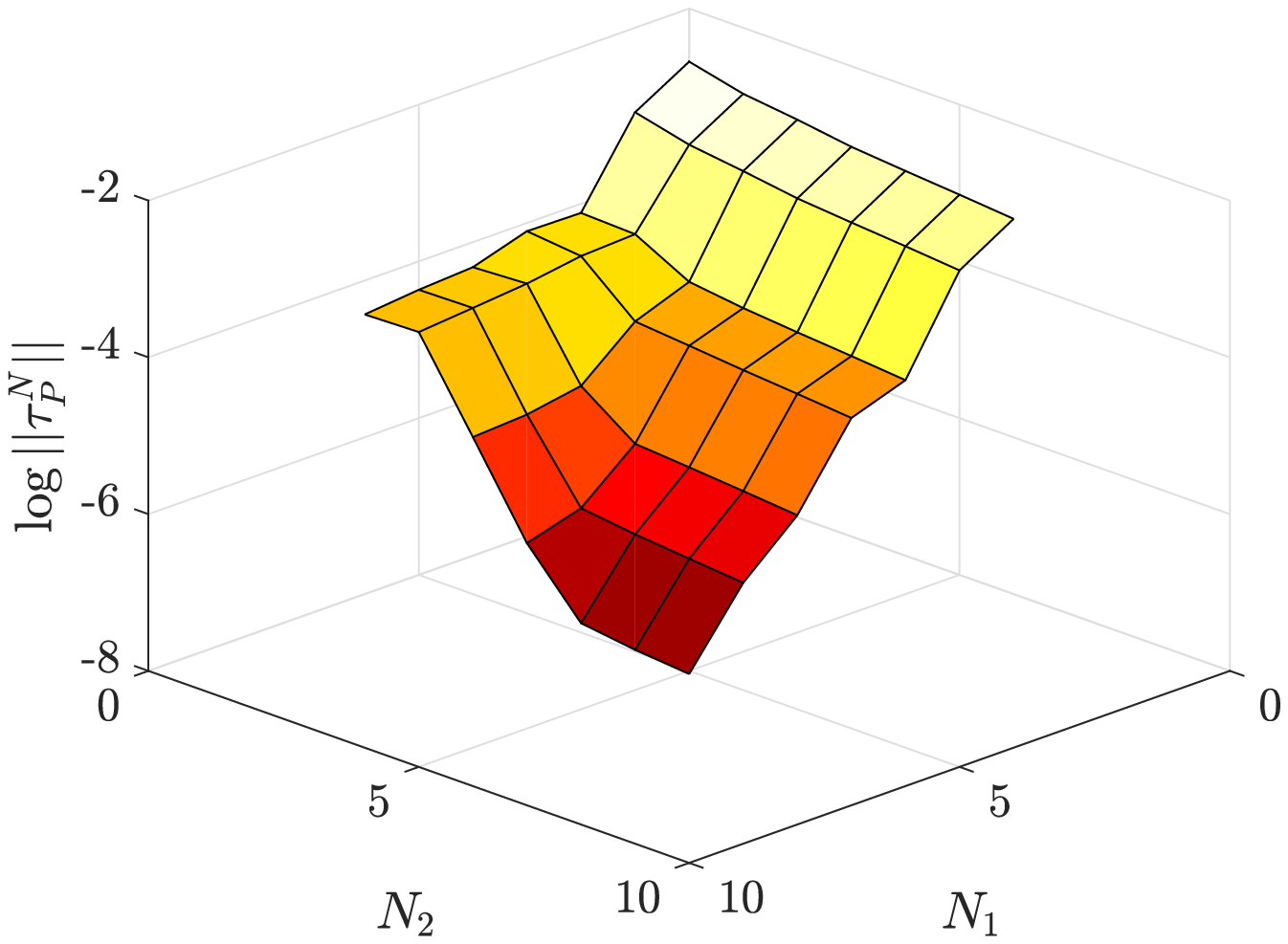}}\qquad
\subfigure[New anisotropic $\tau$-estimation method with high-order extrapolation]{\label{fig:Res_TEmapEstim} \includegraphics[width=0.45\textwidth]{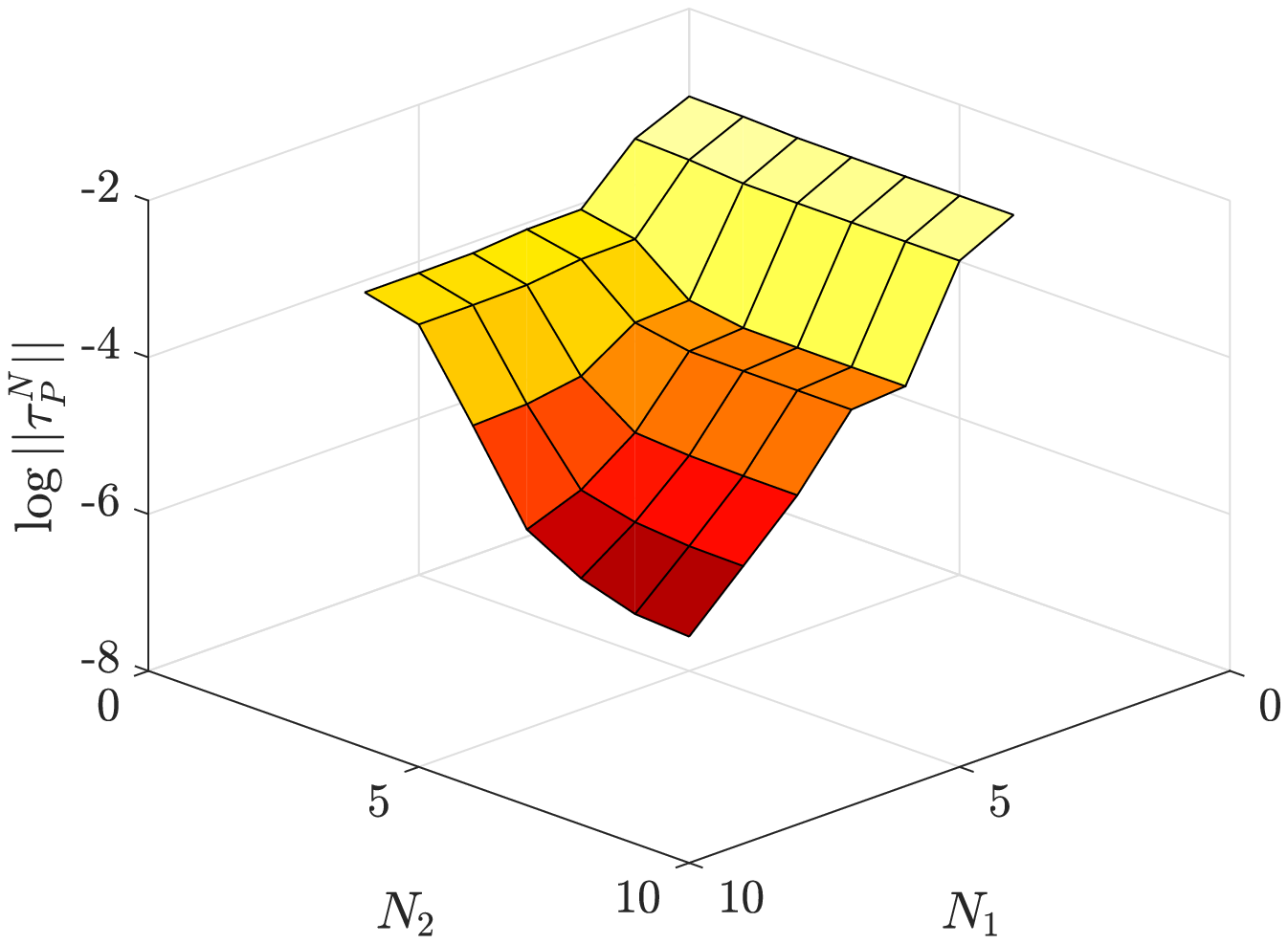}}
\subfigure[Conventional $\tau$-estimation with low order extrapolation (complete hyperplane)]{\label{fig:Res_TEmapKompen} \includegraphics[width=0.45\textwidth]{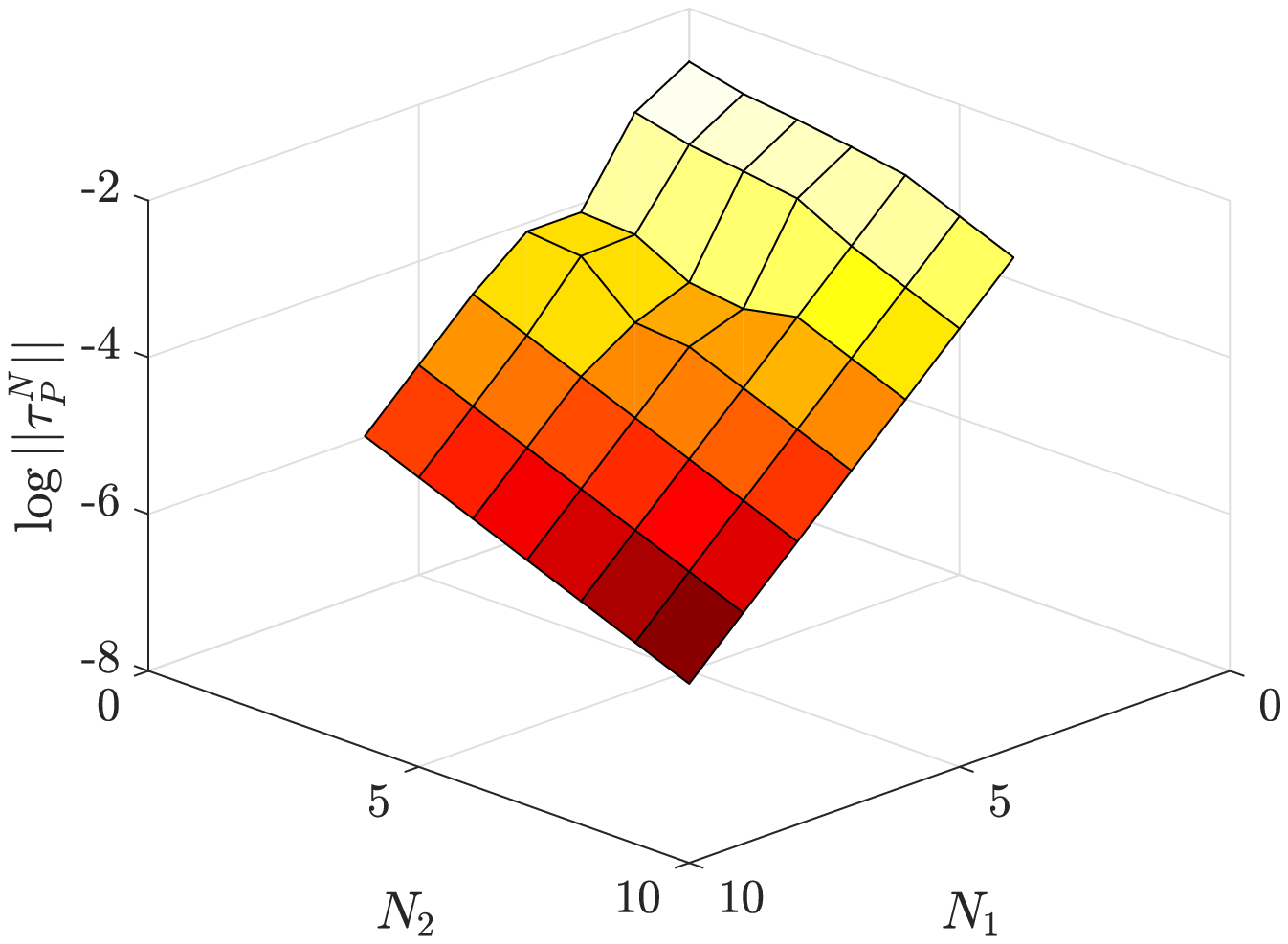}}\qquad
\subfigure[Overlapped surfaces]{\label{fig:Res_TEmapOverlap} \includegraphics[width=0.45\textwidth]{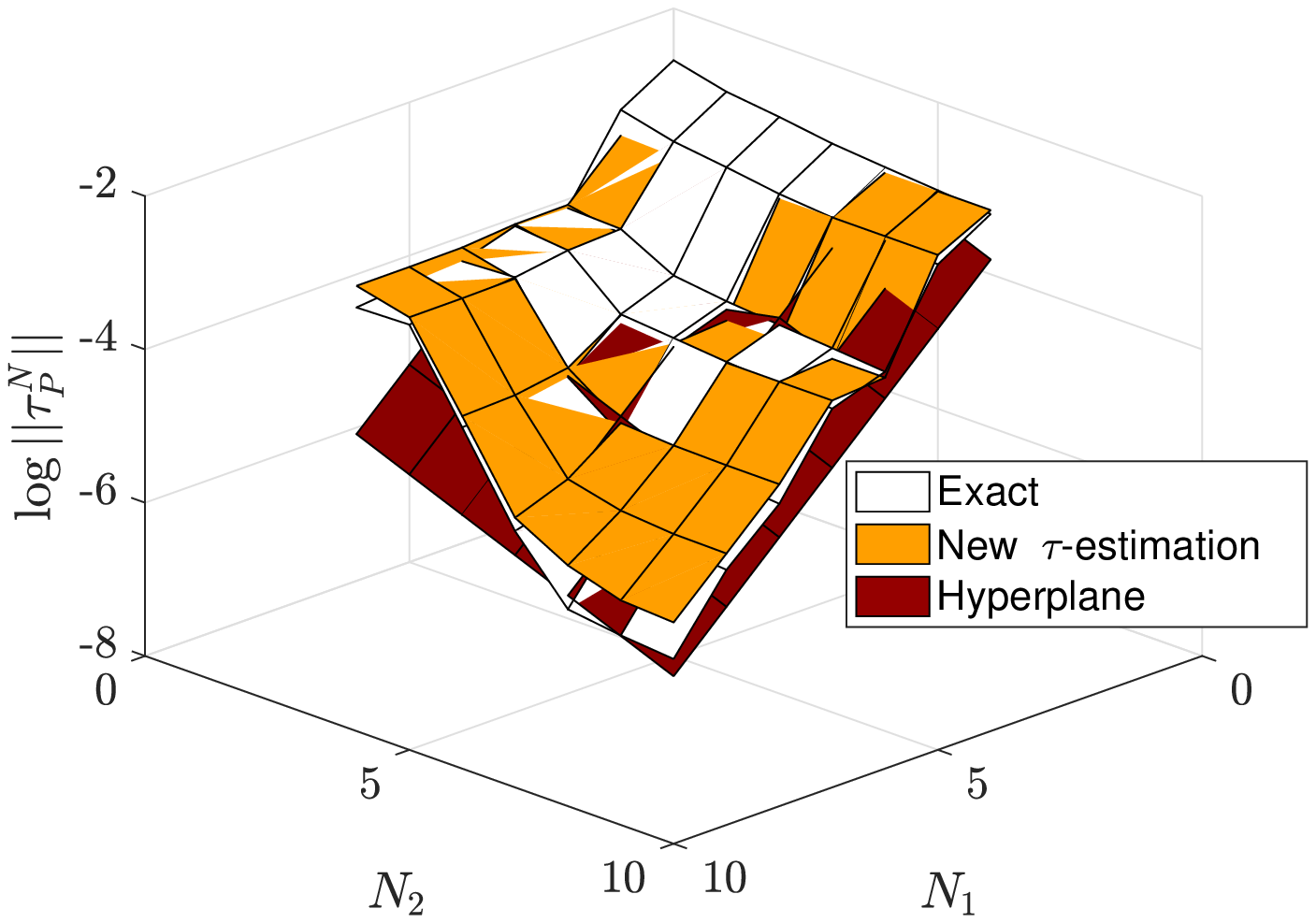}}
\caption{Spatial representation of Two-dimensional anisotropic truncation error maps for the manufactured solutions test case} \label{fig:Res_Surfaces}

\end{center}
\end{figure}

If we generate the truncation error map using the method of Kompenhans et al. \cite{Kompenhans2016} (section \ref{sec:OldPolOrders}), we obtain Figure \ref{fig:TEmapKompen}. Remark that although the method of Kompenhans et al. produces accurate results for $N_i<P$, it fails to predict the behavior of the truncation error for $N_i \ge P$. In fact, using this method the full truncation error map is not being generated, but only the extrapolations for the iso-$N_i$ lines $N_1 = P_1-1$ and $N_2=P_2-1$.\\

\begin{figure}[h!]
\begin{center}
\includegraphics[width=0.4\textwidth]{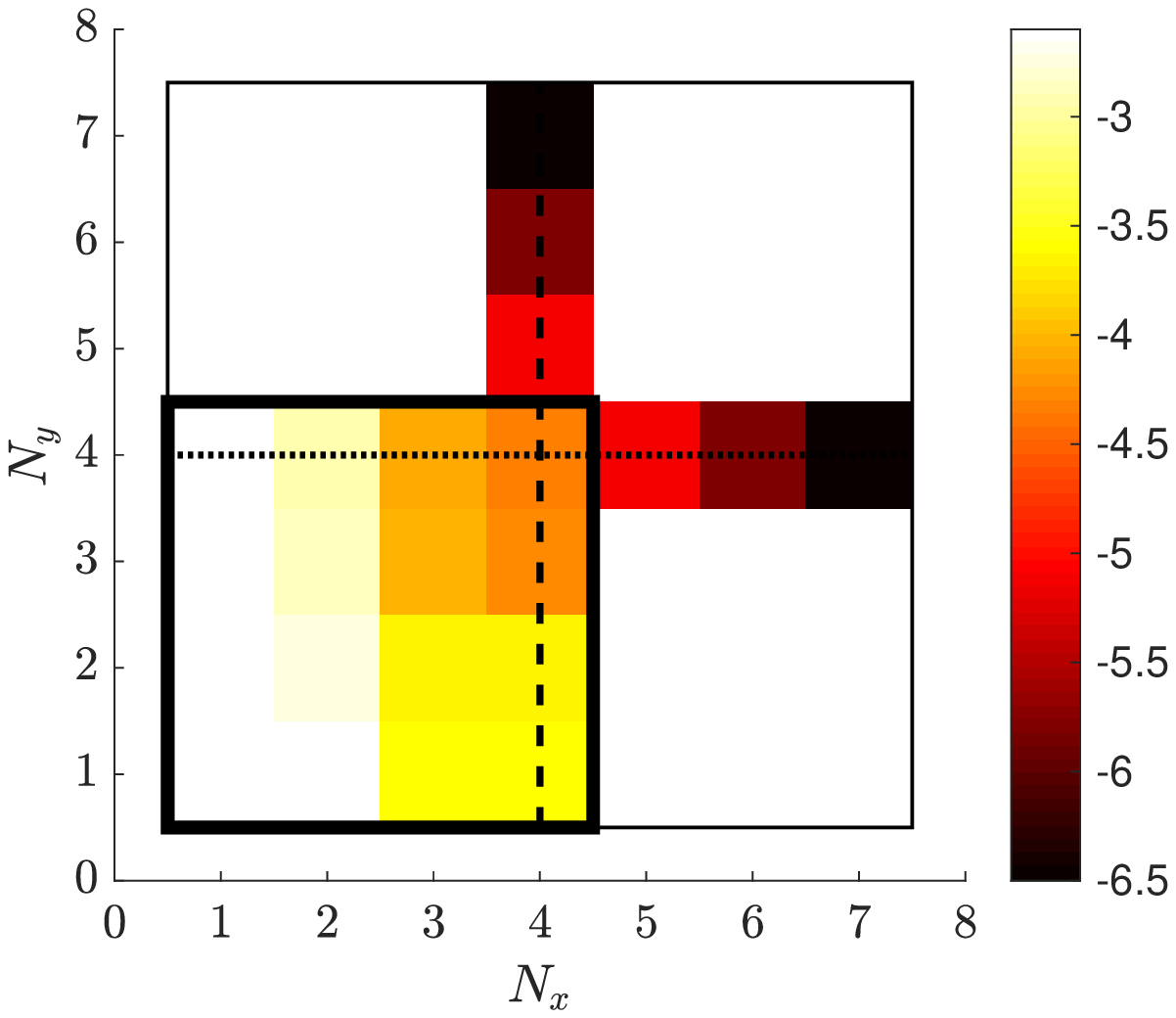}
\caption{Truncation error map estimated using the model of Kompenhans et al. \cite{Kompenhans2016} (logarithmic scale). Outside the black box are the extrapolated values of the estimated truncation error}\label{fig:TEmapKompen}

\vspace*{\floatsep}

\subfigure[Estimated $\tau_1$, $\tau_2$ and $\tau_{5,5}^{Nx,4}$ with $P_1=P_2=5$ (proposed anisotropic model)]{\label{fig:ValidationTauEstimNEWNy4} \includegraphics[width=0.45\textwidth]{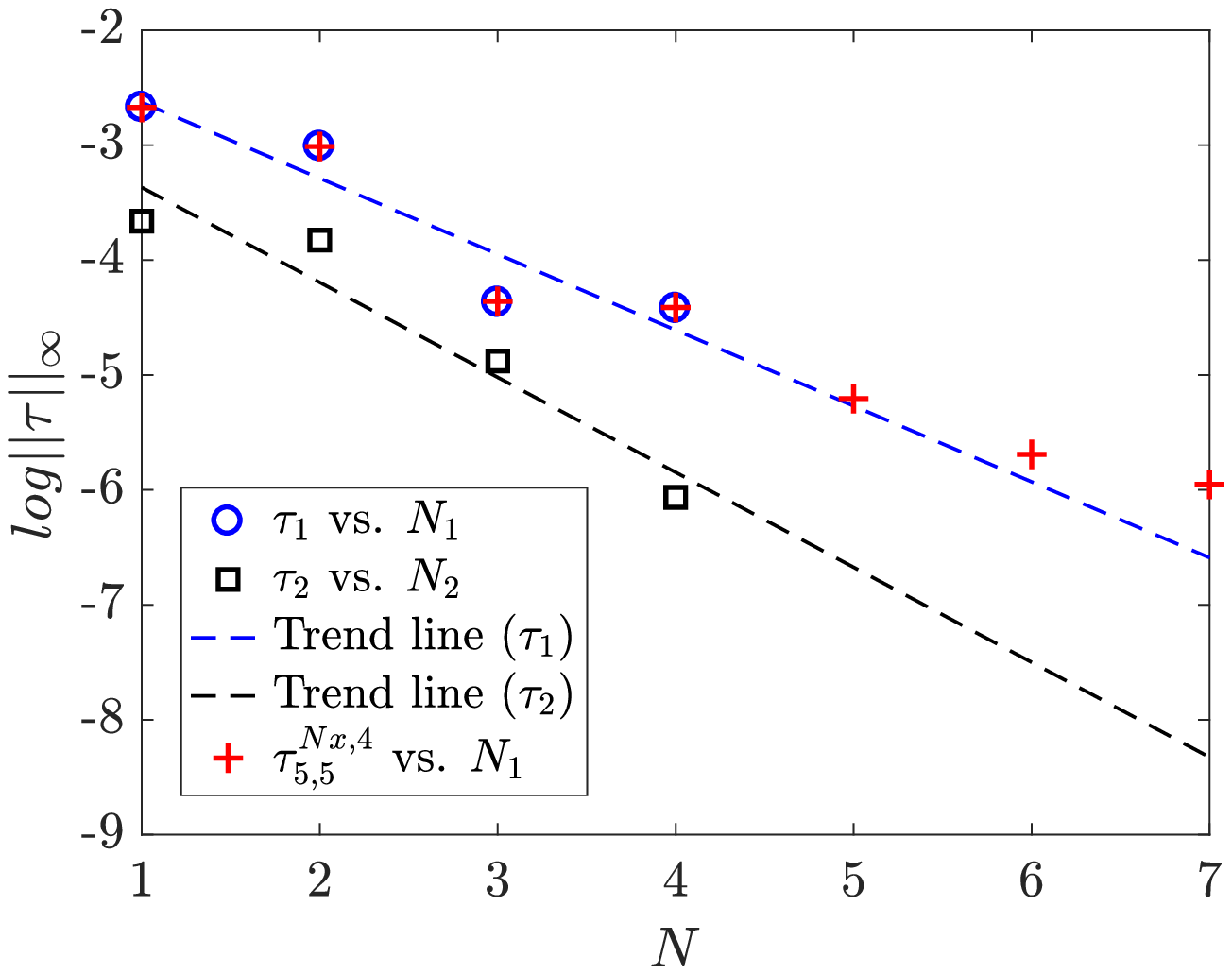}}\qquad
\subfigure[Truncation error vs. $N_1$ for $N_2=4$]{\label{fig:ValidationKompenComparisonNy4} \includegraphics[width=0.45\textwidth]{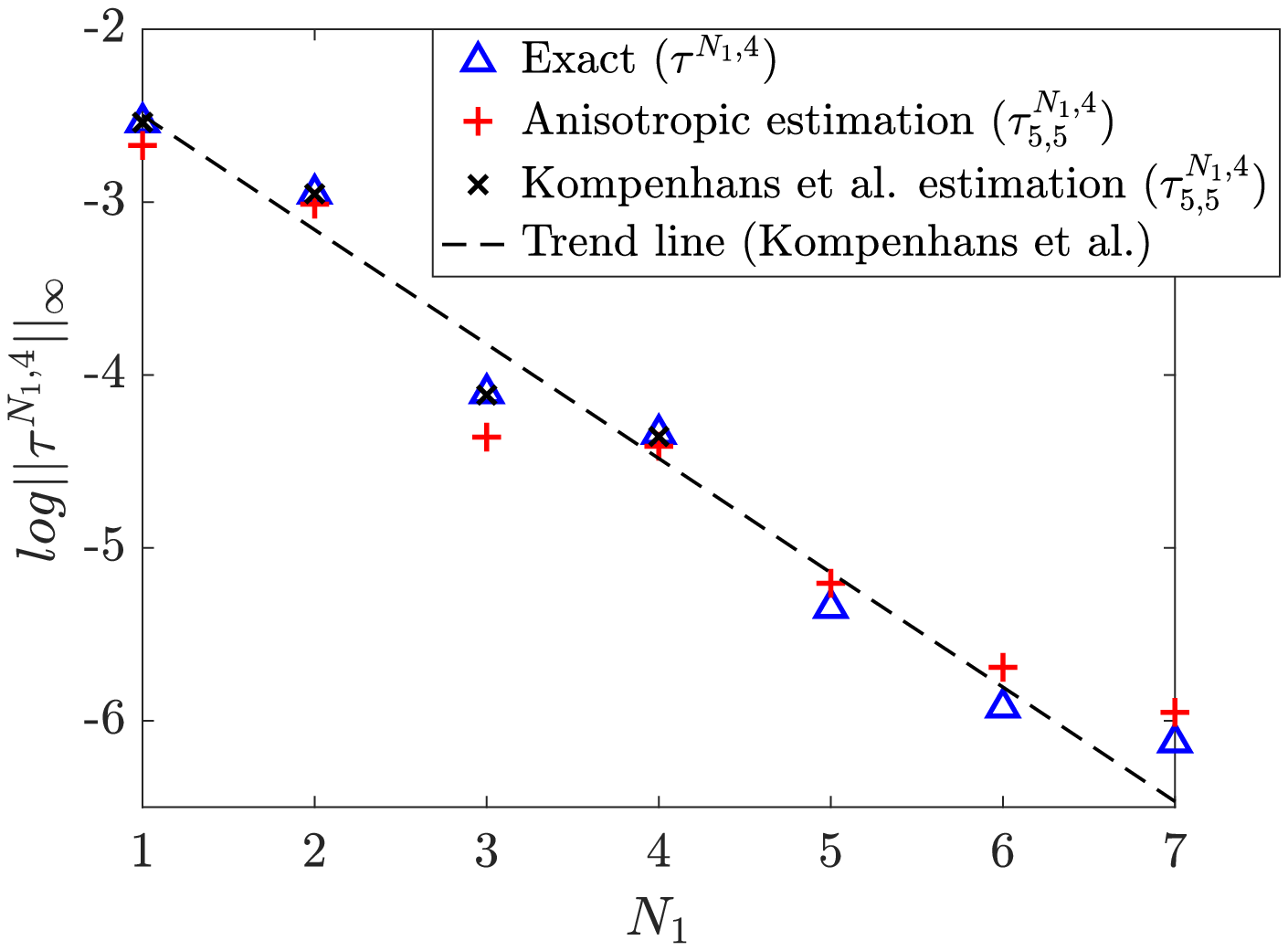}}
\caption{Truncation error estimation using the new anisotropic model (a) and comparison with values obtained using the model of Kompenhans et al. \cite{Kompenhans2016} (b) for $N_2=4$ in element A (dotted line of Figure \ref{fig:TEmapKompen}) - Logarithmic scale} \label{fig:ValidationTauKompenNy4}

\vspace*{\floatsep}

\subfigure[Estimated $\tau_1$, $\tau_2$ and $\tau_{5,5}^{4,Ny}$ with $P_1=P_2=5$ (proposed anisotropic model)]{\label{fig:ValidationTauEstimNEWNx4} \includegraphics[width=0.45\textwidth]{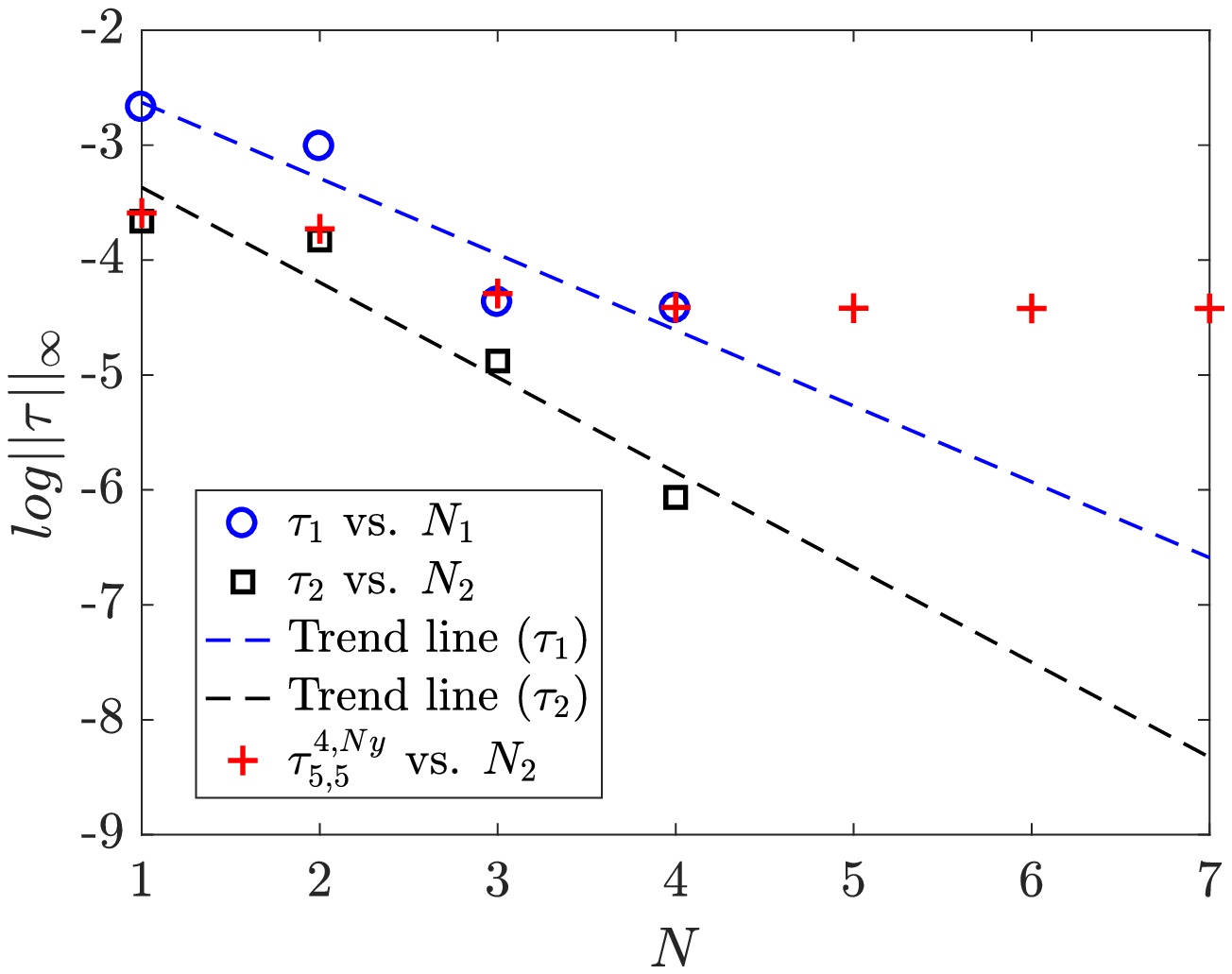}}\qquad
\subfigure[Truncation error vs. $N_2$ for $N_1=4$]{\label{fig:ValidationKompenComparisonNx4} \includegraphics[width=0.45\textwidth]{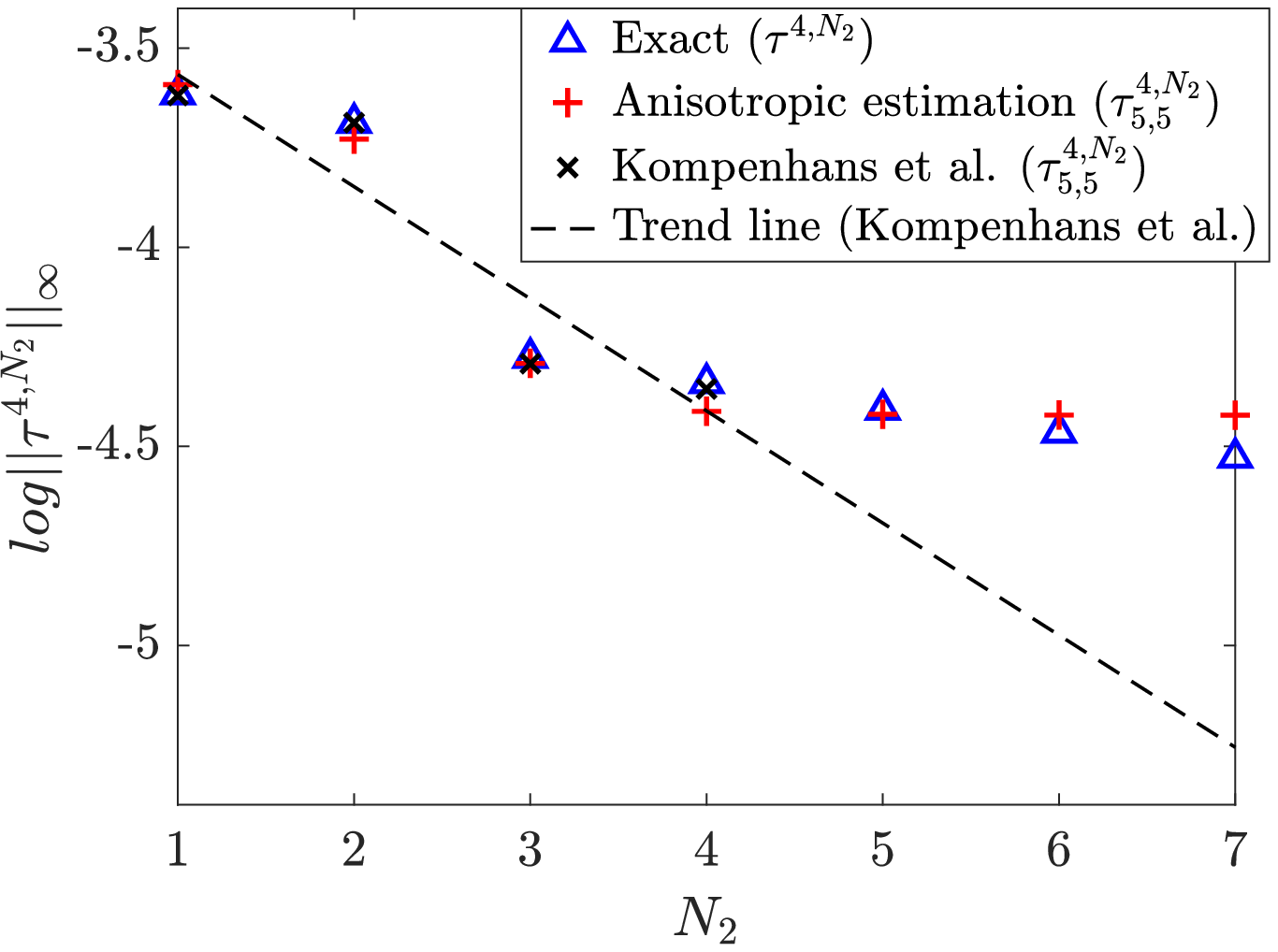}}
\caption{Truncation error estimation using the new anisotropic model (a) and comparison with values obtained using the model of Kompenhans et al. \cite{Kompenhans2016} (b) for $N_1=4$ in element A (dashed line of Figure \ref{fig:TEmapKompen}) - Logarithmic scale} \label{fig:ValidationTauKompenNx4}
\end{center}
\end{figure}

A close inspection of the values of the truncation error for a fixed polynomial order (dashed and dotted lines of Figure \ref{fig:TEmapKompen}) can reveal details about the extrapolated map. Let us first analyze the truncation error for a fixed $N_2=4$. In Figure \ref{fig:ValidationTauEstimNEWNy4} we illustrate how $\tau_{5,5}^{N_1,4}$ is obtained using the new methodology of section \ref{sec:AnisTauEstimation}: the anisotropic contributions of the truncation error, $\tau_1$ and $\tau_2$, are used to generate independent trend lines and their values are then used to compute $\tau_{5,5}^{N_1,4}$. Figure \ref{fig:ValidationKompenComparisonNy4} shows a comparison of this result with the exact truncation error and the one obtained using the method of Kompenhans et al.. It is remarkable that spectral convergence can be observed and both error estimators predict it.\\

Now, let us analyze the case of a fixed $N_1=4$. Figure \ref{fig:ValidationTauEstimNEWNx4} illustrates how $\tau_{5,5}^{4,N_2}$ is obtained. Notice how, in this case, for $N_2 \ge 3$ a stagnation in the decreasing rate of the truncation error occurs because 
\begin{equation}
\norm{\tau_2 (N_2 \ge 3)}_{\infty} \le \norm{\tau_1(N_1=4)}_{\infty}.
\end{equation}

Figure \ref{fig:ValidationKompenComparisonNx4} shows a comparison of this result with the exact truncation error and the one obtained using the method of Kompenhans et al.. Remark that the exact truncation error also exhibits the stagnation behavior for $N_2 \ge 3$, but a linear extrapolation of the values of $\tau$ would under-predict the truncation error for $N_2>4$. The reason is that spectral convergence can be expected for the decoupled terms ($\tau_i$), but not necessarily for the total truncation error along lines of the map (Theorems \ref{theo:AnisTruncErrorConvergence} and \ref{theo:NotSpectralOnLines}). This simple example shows how the anisotropic error estimator formulated in this paper can generate more accurate representations of the truncation error map for $N_i \ge P_i$ than previous estimators.

\subsection{\textit{Non-isolated} truncation error vs. isolated truncation error} \label{sec:ResIsolTruncError}

As was discussed above, both the \textit{non-isolated} and the \textit{isolated} truncation error can be approximated using the anisotropic method introduced in this paper. In this section, we analyze how both estimators perform with the new anisotropic approximation when driving a p-adaptation procedure. The fully converged solution of order $P_1=P_2=5$ is used as the reference mesh for the anisotropic $\tau$-estimation procedure with \textit{high-order extrapolation} explained in section \ref{sec:NewPolOrders}. Different truncation error thresholds are studied in the range $10^{-7} \le \tau_{max} < 10^{-1}$, and the polynomial order is selected after the estimation so that the number of degrees of freedom is minimized (see Figure \ref{fig:ValidationDOFs}). The maximum polynomial order allowed in any direction is selected as $N_{max}=10$, and the minimum polynomial order as $N_{min}=1$.\\

Figure \ref{fig:TauBehavior} shows the \textit{non-isolated} truncation error that was achieved after the mesh adaptation as a function of the specified threshold ($\tau_{max}$), and Figure \ref{fig:IsolTauBehavior} illustrates the \textit{isolated} truncation error that was achieved for different values of $\tau_{max}$. Two plateaux can be observed in both figures, one for $\tau_{max} \le 10^{-5}$ and one for $\tau_{max} \ge 6 \times 10^{-3}$ as a consequence of the limiting polynomial orders. The first plateau corresponds to the minimum $||\tau||_{\infty}$ (and $||\hat \tau||_{\infty}$) that can be achieved when $N_1=N_2=N_{max}=10$, and the second corresponds to the maximum $||\tau||_{\infty}$ (and $||\hat \tau||_{\infty}$) that can be achieved when $N_1=N_2=N_{min}=1$ in every element. For the remaining specified thresholds both estimators perform reasonably well, being the \textit{isolated} truncation error slightly better. The small gap between the ideal and achieved errors is attributed to small errors in the estimation procedure.\\ 
\begin{figure}[htb]
\begin{center}
\subfigure[\textit{Non-isolated} truncation error.]{\label{fig:TauBehavior} \includegraphics[width=0.46\textwidth]{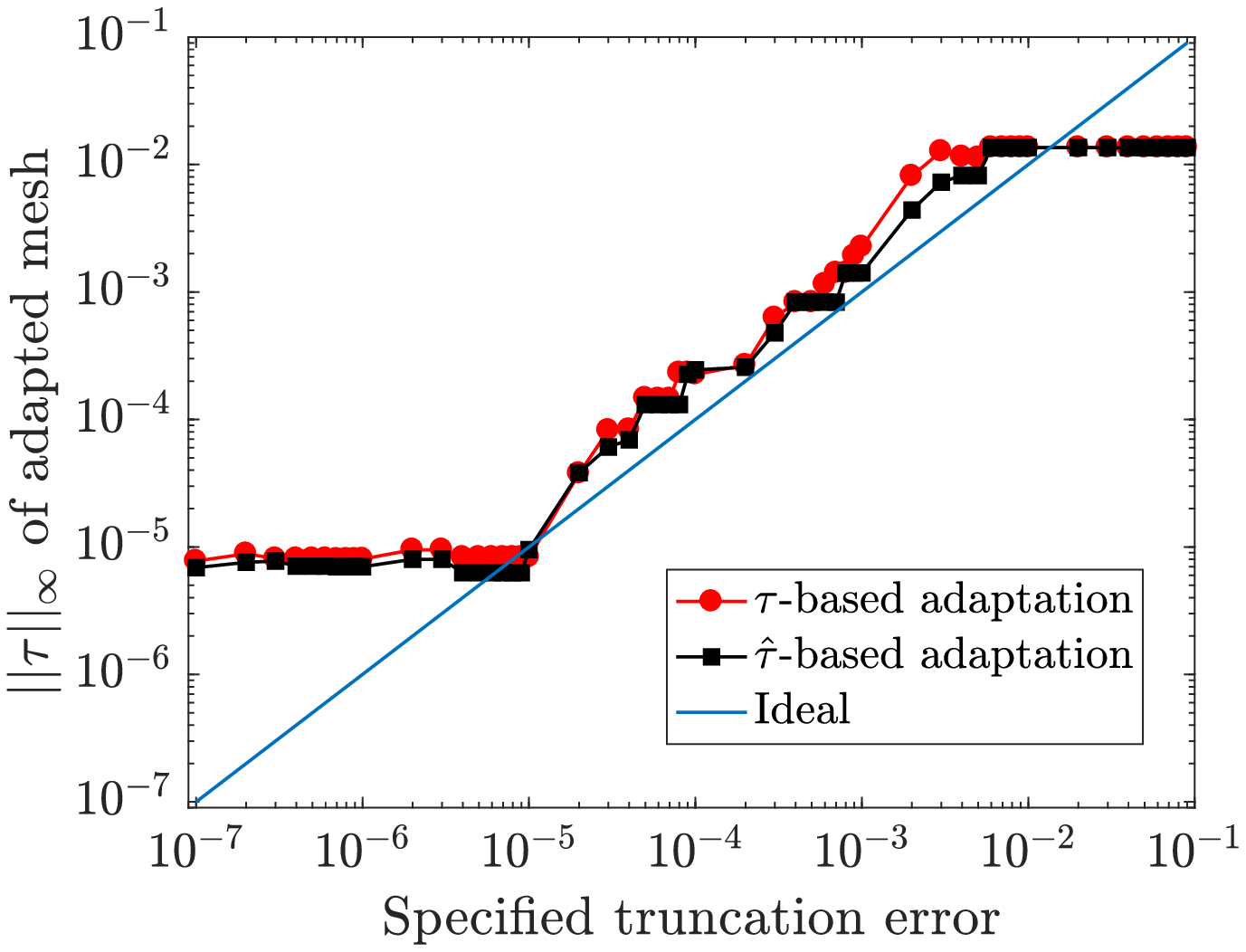}}\qquad
\subfigure[\textit{Isolated} truncation error.]{\label{fig:IsolTauBehavior} \includegraphics[width=0.46\textwidth]{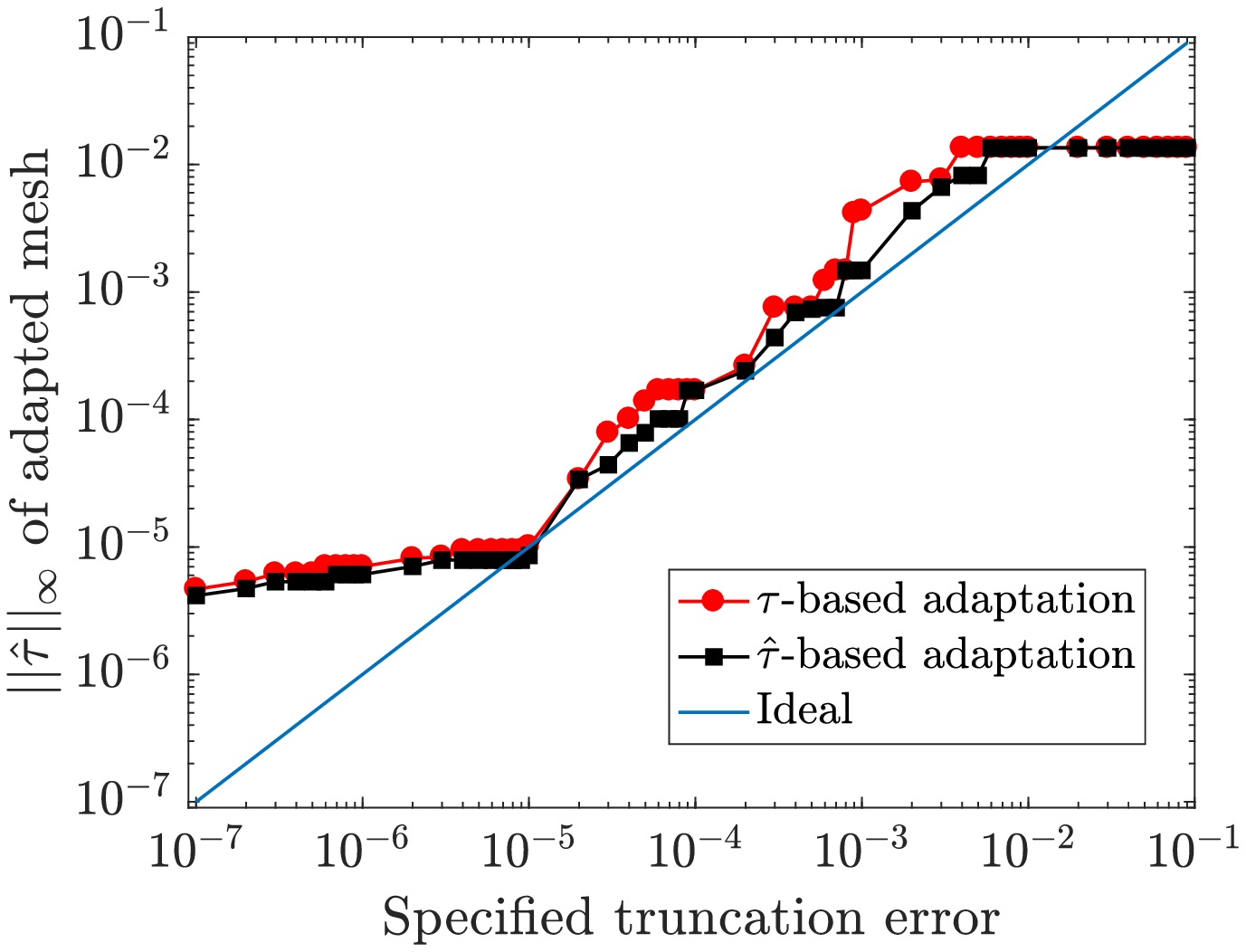}}

\caption{Achieved \textit{non-isolated} truncation error (a) and \textit{isolated} truncation error (b) for adaptation procedures based on the \textit{non-isolated} and the \textit{isolated} truncation error for $N_{max}=10$} \label{fig:ValidationTauBehavior}
\end{center}
\end{figure}

As these results show, controlling the \textit{isolated} truncation error of a mesh also controls its \textit{non-isolated} truncation error: a further advantage of the \textit{isolated} estimator. In fact, we can write the \textit{non-isolated} truncation error in terms of the \textit{isolated} truncation error from the definitions in section \ref{sec:ErrorDefinitions}, and appendixes \ref{sec:IsolTruncError} and \ref{sec:ProofNonIsolated}:
\begin{equation} \label{eq:TauInTermsOfIsolTau}
\tau^N = \hat \tau^N + \int^N_{\partial \Omega} \left( \mathscr{F}(\mathbf{I}^N \bar{\mathbf{q}}) \cdot \mathbf{n} - \mathscr{F}^{*}(\mathbf{I}^N \bar{\mathbf{q}},\mathbf{I}^N \bar{\mathbf{q}}^{\underline{\ }},\mathbf{n}) \right) \phi d \sigma.
\end{equation}

Equation \ref{eq:TauInTermsOfIsolTau} suggests that the \textit{isolated} truncation error is expected to control the \textit{non-isolated} truncation error for sufficiently smooth solutions, for an appropriate choice of the numerical flux. This topic will be addressed in detail in future investigations.\\

Taking into account that the main difference of the \textit{non-isolated} truncation error is that it is affected by neighboring elements, we can conclude that the \textit{isolated} estimator is a better driver for p-adaptation methods than the \textit{non-isolated} truncation error estimator. Namely, because it would be excessively expensive to evaluate every possible combination of polynomial orders for each element of the mesh \textit{and} its neighbors in order to feed the p-adaptation procedure.

\section{Conclusions} \label{sec:Conclusions}
In this paper, we have studied truncation error estimators, their convergence properties and accuracy. The most important conclusions of this work are:
\begin{enumerate}

\item A new technique for evaluating the truncation error was developed which requires less computational resources in the estimation procedure than previous implementations. Furthermore, this technique allows computing extrapolations of the truncation error with enhanced accuracy compared with previous methods. This enables using coarser reference meshes, hence further improving the computational efficiency.

\item The presented method provides truncation error estimations that are accurate enough for performing p-adaptation, as shown in sections \ref{sec:TEmaps} and \ref{sec:ResIsolTruncError}. 

\item According to the analyses conducted in this paper, the \textit{isolated} truncation error is better suited to drive a p-adaptation procedure than its \textit{non-isolated} counterpart. In the first place, because the \textit{non-isolated} error is affected by the discretization in other regions. Second, and as stated in remark \ref{rem:TruncErrorReq}, the \textit{non-isolated} truncation error estimator imposes certain requirements for the extrapolation procedure to work well. This translates into a more expensive $\tau$-estimation. Furthermore, additional requirements are needed in order for the Theorem \ref{theo:NewTauEstimator} to hold with the \textit{non-isolated} truncation error.

\item The method of Kompenhans et al. \cite{Kompenhans2016}, in which every combination of $N = (N_1,N_2,\cdots,N_d)$ is directly evaluated for generating the truncation error map, performs slightly better at estimating the truncation error for $N_i<P_i$ than the proposed error estimator, but fails to predict the truncation error for $N_i \ge P_i$ accurately. A good compromise could be to generate the truncation error map for $N_i<P$ using the method of Kompenhans et al., but then changing to the fully decoupled method for generating the extrapolated map. In this case, however, additional evaluations of the discrete partial differential operator must be performed.

\end{enumerate}

\begin{acknowledgements}
The authors would like to thank David Kopriva for his friendly advise and cooperation. This project has received funding from the European Union’s Horizon 2020 Research and Innovation Program under the Marie Skłodowska-Curie grant agreement No 675008.\\

The authors acknowledge the computer resources and technical assistance provided by the \textit{Centro de Supercomputación y Visualización de Madrid} (CeSViMa).
\end{acknowledgements}




\appendix

\section{Isolated truncation error dependence on inteprolation error} \label{sec:IsolTruncError}

According to definition \ref{Def:IsolTruncError} and equation \ref{eq:IsolatedDiscreteNonlinOperator}, the \textit{isolated} truncation error in the DGSEM can be expressed for any basis function $\phi$ in an element $e$ as

\begin{equation} \label{eq:AppIsolTruncError}
\hat \tau^N \bigr\rvert_{\Omega^e} = \hat{\mathcal{R}} (\mathbf{I}^N \bar{\mathbf{q}}) = \int^N_{\Omega^e} \mathbf{s}^N \phi d\Omega + \int^N_{\Omega^e} \mathscr{F}^N \cdot \nabla \phi d \Omega - \int^N_{\partial \Omega^e} \mathscr{F}^N \cdot \mathbf{n}  \phi d \sigma,
\end{equation}
where the superindex $N$ on the integrals indicates that they are approximated with a Gaussian quadrature of order $N$ and the superindex $e$ has been dropped for readability. Since the DGSEM is a collocation method, the value computed with equation \ref{eq:AppIsolTruncError} corresponds to the \textit{isolated} truncation error on the node of the basis function $\phi$. The terms $\mathbf{s}^N$ and $\mathscr{F}^N$ can be expressed in terms of the interpolation error as

\begin{equation} \label{eq:AppInterpol}
\mathscr{F}^N = \mathbf{I}^N \mathscr{F}(\bar{\mathbf{q}}) = \mathscr{F}(\bar{\mathbf{q}}) - \varepsilon^N_{\mathscr{F}}, \ \ \ \mathbf{s}^N = \mathbf{I}^N \mathbf{s} = \mathbf{s} - \varepsilon^N_{\mathbf{s}}.
\end{equation}

Inserting equation \ref{eq:AppInterpol} into \ref{eq:AppIsolTruncError}, integrating by parts, and expressing everything with $L_2(\Omega)$ inner product notation we obtain,

\begin{equation}
\hat \tau^N \bigr\rvert_{\Omega^e} = - \cancel{\left( \varepsilon_{\mathbf{s}}^N,\phi \right)^N_{\Omega^e} } + \left( \nabla \cdot \varepsilon^N_{\mathscr{F}} , \phi \right)^N_{\Omega^e} + \mathcal{O} \left( e_{\int}^N \right),
\end{equation}

where $(\cdot,\cdot)^N_{\Omega^e}$ stands for the $L_2$ product operator evaluated with a quadrature of order $N$ in the domain ${\Omega^e}$. The first term on the right-hand side vanishes since the value of $\varepsilon_{\mathbf{s}}^N$ is zero on the quadrature nodes (the DGSEM is a collocation method). Furthermore, it is reasonable to neglect the quadrature error since it is of a lower order of magnitude than the value of the integral. Therefore, we obtain

\begin{equation}
\hat \tau^N \bigr\rvert_{\Omega^e} \approx \left( \nabla \cdot \varepsilon^N_{\mathscr{F}} , \phi \right)^N_{\Omega^e}.
\end{equation}

\section{Anisotropic \textit{non-isolated} truncation error estimation} \label{sec:ProofNonIsolated}
In this section, we show briefly that the \textit{non-isolated} truncation error can be estimated anisotropically using theorem \ref{theo:NewTauEstimator}. In order to do so, we need some additional assumptions.

\subsection{\textbf{Additional assumptions}}
As in section \ref{sec:AnisTauEstimation}, following assumptions are a consequence of the tensor product basis functions of the DGSEM and hold for sufficiently smooth solutions in the asymptotic range:
\begin{enumerate}[label=(\alph*)]
\setcounter{enumi}{2}
\item The discretization error has an anisotropic behavior and, therefore, can be  decoupled in directional components. For the 2D case:
	\begin{equation} \label{eq:DiscErrorAnis}
	\epsilon^{N_1N_2} = \epsilon_1^{N_1N_2} + \epsilon_2^{N_1N_2}.
	\end{equation}
	As in (a), $\epsilon_i$ is the projection of the global discretization error, $\epsilon$, into a local direction, $i$.
\item The \textit{locally-generated} discretization error in each direction depends only on the polynomial order in that direction:
	\begin{equation} \label{eq:DiscErrorDependace}
	\epsilon_{\Omega,i}^{N_1N_2} = \epsilon_{\Omega,i}^{N_1N_2}(N_i)
	\end{equation}
\end{enumerate}
Similar as in remark \ref{rem:TruncErrorReq}, and for reasons that will become clear at the end of the proof, following additional assumption is required:
\begin{enumerate}[label=(\alph*)]
\setcounter{enumi}{4}
\item The $\tau$-estimation procedure is performed element-wise while keeping the polynomial order in other elements \textit{sufficiently} high so that:
\begin{equation}
\norm{\epsilon^N_{\partial \Omega}} \ll \norm{\epsilon^N_{\Omega}}
\end{equation}
\end{enumerate}

As (a) and (b), assumptions (c) and (d) also follow from the work of Rubio et al. \cite{Rubio2015,rubio2015truncation}. Remark that assumptions (a), (b), (c) and (d) are consistent with the dependence of the \textit{non-isolated} truncation error on the discretization error (equation \ref{eq:TruncErrorDiscError}). \\ 

Let us note that the assumption (d) implies that, for smooth solutions in the asymptotic range, the discretization error in one direction does not change considerably when the polynomial order in another direction is changed:
\begin{eqnarray}\label{eq:AnisFASAssump}
\epsilon_j^{N_iP_j} \approx \epsilon_j^{P_iP_j},  \nonumber \\
\epsilon_i^{N_iP_j} \neq    \epsilon_i^{P_iP_j},
\end{eqnarray}

with $i \ne j$, and $1 \le i,j \le 2$.\\

\begin{proof}

Following the same procedure as in Appendix \ref{sec:IsolTruncError}, according to definition \ref{Def:TruncError} and equation \ref{eq:Rdiscrete}, the \textit{non-isolated} truncation error in the DGSEM can be expressed for any basis function $\phi$ in an element $e$ as

\begin{equation} \label{eq:AppTruncError}
\tau^N \bigr\rvert_{\Omega^e} = \mathcal{R} (\mathbf{I}^N \bar{\mathbf{q}}) = \int^N_{\Omega^e} \mathbf{s}^N \phi d\Omega + \int^N_{\Omega^e} \mathscr{F}(\mathbf{I}^N\bar{\mathbf{q}}) \cdot \nabla \phi d \Omega - \int^N_{\partial \Omega^e} \mathscr{F}^{*}(\mathbf{I}^N \bar{\mathbf{q}},\mathbf{I}^N \bar{\mathbf{q}}^{\underline{\ }},\mathbf{n}) \phi d \sigma,
\end{equation} 
where $\bar{\mathbf{q}}^{\underline{\ }}$ is the external (neighbor element's) solution and the superindex ``$e$" has been dropped for the local solution. Since the DGSEM is a collocation method, the value computed with equation \ref{eq:AppTruncError} corresponds to the \textit{non-isolated} truncation error on the node of the basis function $\phi$. After inserting the definition of discretization error (def. \ref{Def:DiscError}), $\bar{\mathbf{q}}=\bar{\mathbf{q}}^N+\epsilon^N$, and expanding the fluxes using Taylor series we obtain

\begin{equation}
\tau^N \bigr\rvert_{\Omega^e} \approx
\int^N_{\Omega^e} \frac{\partial \mathscr{F}}{\partial \mathbf{q}} \biggr\rvert_{\bar{\mathbf{q}}^N} \epsilon^N \cdot \nabla \phi d \Omega 
- \int^N_{\partial \Omega^e} \frac{\partial \mathscr{F^*}}{\partial \mathbf{q}} \biggr\rvert_{\bar{\mathbf{q}}^N,\bar{\mathbf{q}}^{\bunderline{N}},\mathbf{n}} \epsilon^N \phi d \sigma 
- \int^N_{\partial \Omega^e} \frac{\partial \mathscr{F^*}}{\partial \mathbf{q}^{\underline{\ }}} \biggr\rvert_{\bar{\mathbf{q}}^N,\bar{\mathbf{q}}^{\bunderline{N}},\mathbf{n}} \epsilon^{\bunderline{N}} \phi d \sigma,
\end{equation}
where the interpolant of the discretization error is omitted for readability ($\mathbf{I}^N\epsilon^N \rightarrow \epsilon^N$), $\epsilon^N$ is the discretization error of the element $e$, and $\epsilon^{\bunderline{N}}$ is the discretization error of a neighbor element connected through the surface $\partial \Omega$. Notice that, for the sake of readability, the symbol for the external polynomial orders is the same as of the internal ones, i.e. $N$, although they can be different.\\

We now want to approximate the \textit{non-isolated} truncation error through $\tau$-estimation. We part from the definition of the discretization error (equation \ref{eq:DiscError}). Adding and subtracting the discrete solution on a higher order grid, $\mathbf{q}^P$, yields
\begin{align*}
\epsilon^N &= \bar{\mathbf{q}}-\bar{\mathbf{q}}^P+\bar{\mathbf{q}}^P-\bar{\mathbf{q}}^N \\
\epsilon^N &= \epsilon^P +\bar{ \mathbf{q}}^P -\bar{ \mathbf{q}}^N.
\end{align*}

Reorganizing we have
\begin{equation} \label{eq:AppFineSol}
\bar{\mathbf{q}}^P = \bar{\mathbf{q}}^N + \epsilon^N - \epsilon^P.
\end{equation}

Therefore, the $\tau$-estimation yields

\begin{multline} \label{eq:AppTruncErrorEstimate}
\tau_P^N \bigr\rvert_{\Omega^e} = \mathcal{R} (\mathbf{I}^N \bar{\mathbf{q}}^P) \approx
\int^N_{\Omega^e} \frac{\partial \mathscr{F}}{\partial \mathbf{q}} \biggr\rvert_{\bar{\mathbf{q}}^N} (\epsilon^N - \epsilon^P) \cdot \nabla \phi d \Omega 
- \int^N_{\partial \Omega^e} \frac{\partial \mathscr{F^*}}{\partial \mathbf{q}} \biggr\rvert_{\bar{\mathbf{q}}^N,\bar{\mathbf{q}}^{\bunderline{N}},\mathbf{n}} (\epsilon^N - \epsilon^P) \phi d \sigma  \\
- \int^N_{\partial \Omega^e} \frac{\partial \mathscr{F^*}}{\partial \mathbf{q}^{\underline{\ }}} \biggr\rvert_{\bar{\mathbf{q}}^N,\bar{\mathbf{q}}^{\bunderline{N}},\mathbf{n}} (\epsilon^{\bunderline{N}} - \epsilon^{\bunderline{P}}) \phi d \sigma.
\end{multline}

Since it is possible to decouple the discretization error inside our analyzed element in a \textit{locally-generated} and an \textit{externally-generated} component (equation \ref{eq:DiscErrorLocalExt}), equation \ref{eq:AppTruncErrorEstimate} can be rewritten as

\begin{align} \label{eq:AppTruncErrorEstimate2}
\tau_P^N \bigr\rvert_{\Omega^e} \approx
\int^N_{\Omega^e} \frac{\partial \mathscr{F}}{\partial \mathbf{q}} \biggr\rvert_{\bar{\mathbf{q}}^N} (\epsilon_{\Omega}^N - \epsilon_{\Omega}^P) \cdot \nabla \phi d \Omega 
- \int^N_{\partial \Omega^e}& \frac{\partial \mathscr{F^*}}{\partial \mathbf{q}} \biggr\rvert_{\bar{\mathbf{q}}^N,\bar{\mathbf{q}}^{\bunderline{N}},\mathbf{n}} (\epsilon_{\Omega}^N - \epsilon_{\Omega}^P) \phi d \sigma \nonumber \\
+ \int^N_{\Omega^e} \frac{\partial \mathscr{F}}{\partial \mathbf{q}} \biggr\rvert_{\bar{\mathbf{q}}^N} (\epsilon_{\partial \Omega}^N - \epsilon_{\partial \Omega}^P) \cdot \nabla \phi d \Omega 
- \int^N_{\partial \Omega^e}& \frac{\partial \mathscr{F^*}}{\partial \mathbf{q}} \biggr\rvert_{\bar{\mathbf{q}}^N,\bar{\mathbf{q}}^{\bunderline{N}},\mathbf{n}} (\epsilon_{\partial \Omega}^N - \epsilon_{\partial \Omega}^P) \phi d \sigma \nonumber \\
&- \int^N_{\partial \Omega^e} \frac{\partial \mathscr{F^*}}{\partial \mathbf{q}^{\underline{\ }}} \biggr\rvert_{\bar{\mathbf{q}}^N,\bar{\mathbf{q}}^{\bunderline{N}},\mathbf{n}} (\epsilon^{\bunderline{N}} - \epsilon^{\bunderline{P}}) \phi d \sigma.
\end{align}

Equation \ref{eq:AppTruncErrorEstimate2} holds even for anisotropic representations, i.e. $N=(N_1,N_2,N_3)$ and $P=(P_1,P_2,P_3)$. Remark that if the polynomial order of the elements that are not being analyzed is maintained as high as in the reference mesh, $\epsilon_{\partial \Omega}^P$ cancels out $\epsilon_{\partial \Omega}^N$ and $\epsilon^{\bunderline{N}} - \epsilon^{\bunderline{P}} \approx 0$. I.e., the $\tau$-estimation provides the \textit{locally-generated} truncation error.\\

Let us now consider the case of 2D anisotropic coarsening in the direction $i$ ($\mathbf{N}=(N_i,P_j)$, $\mathbf{P}=(P_i,P_j)$). Taking into account assumptions $(c)$ and $(d)$, we obtain

\begin{align} \label{eq:AppTruncErrorEstimate3}
\tau_{\mathbf{P}}^{\mathbf{N}} \bigr\rvert_{\Omega^e} \approx
\int^{\mathbf{N}}_{\Omega^e} \frac{\partial \mathscr{F}}{\partial \mathbf{q}} \biggr\rvert_{\bar{\mathbf{q}}^{\mathbf{N}}} (\epsilon_{\Omega,i}^{\mathbf{N}} - \epsilon_{\Omega,i}^{\mathbf{P}}) \cdot \nabla \phi d \Omega 
- \int^{\mathbf{N}}_{\partial \Omega^e}& \frac{\partial \mathscr{F^*}}{\partial \mathbf{q}} \biggr\rvert_{\bar{\mathbf{q}}^{\mathbf{N}},\bar{\mathbf{q}}^{\bunderline{{\mathbf{N}}}},\mathbf{n}} (\epsilon_{\Omega,i}^{\mathbf{N}} - \epsilon_{\Omega,i}^{\mathbf{P}}) \phi d \sigma \nonumber \\
+ \int^{\mathbf{N}}_{\Omega^e} \frac{\partial \mathscr{F}}{\partial \mathbf{q}} \biggr\rvert_{\bar{\mathbf{q}}^{\mathbf{N}}} (\epsilon_{\partial \Omega}^{\mathbf{N}} - \epsilon_{\partial \Omega}^{\mathbf{P}}) \cdot \nabla \phi d \Omega 
- \int^{\mathbf{N}}_{\partial \Omega^e}& \frac{\partial \mathscr{F^*}}{\partial \mathbf{q}} \biggr\rvert_{\bar{\mathbf{q}}^{\mathbf{N}},\bar{\mathbf{q}}^{\bunderline{{\mathbf{N}}}},\mathbf{n}} (\epsilon_{\partial \Omega}^{\mathbf{N}} - \epsilon_{\partial \Omega}^{\mathbf{P}}) \phi d \sigma \nonumber \\
&- \int^{\mathbf{N}}_{\partial \Omega^e} \frac{\partial \mathscr{F^*}}{\partial \mathbf{q}^{\underline{\ }}} \biggr\rvert_{\bar{\mathbf{q}}^{\mathbf{N}},\bar{\mathbf{q}}^{\bunderline{{\mathbf{N}}}},\mathbf{n}} (\epsilon^{\bunderline{{\mathbf{N}}}} - \epsilon^{\bunderline{{\mathbf{P}}}}) \phi d \sigma.
\end{align}

Finally, if assumption $(c)$ and $(e)$ hold, the anisotropic version of equation \ref{eq:AppTruncErrorEstimate2} ($N=(N_1,N_2,N_3)$) can be reconstructed by summing all the directional components (equation \ref{eq:AppTruncErrorEstimate3}) if the quadrature errors are neglected. I.e., we recover equation \ref{eq:NewTauEstimator}:
\begin{equation}
\tau^{N_1N_2} \approx \tau^{N_1P_2}_{P_1P_2} +  \tau^{P_1N_2}_{P_1P_2}
\end{equation}
\end{proof}

\section{The Navier-Stokes equations}\label{sec:NS}

The compressible Navier-Stokes equations in conservative form can be written in non-dimensional form as
\begin{equation} \label{eq:NSeq}
\mathbf{q}_t + \nabla \cdot \left( \mathscr{F}^a - \mathscr{F}^{\nu} \right) = \mathbf{s}, 
\end{equation}

where the conserved variables are $\mathbf{q} = \left( \rho, \rho u, \rho v, \rho w, \rho e  \right)^T$ ($\rho$ is the density; $u$, $v$ and $w$ are the velocity components; and $e$ is the specific total energy), $\mathbf{s}$ is an external source term, and $\mathscr{F}^a$ and $\mathscr{F}^{\nu}$ are called the advective and diffusive flux dyadic tensors, respectively, which depend on $\mathbf{q}$. Expanding the fluxes in Cartesian coordinates leads to the expression,

\begin{equation}
\mathbf{q}_t + \mathbf{f}^a_x + \mathbf{g}^a_y + \mathbf{h}^a_z - \frac{1}{\rm{Re}} \left( \mathbf{f}^{\nu}_x + \mathbf{g}^{\nu}_y + \mathbf{h}^{\nu}_z \right) = \mathbf{s}.
\end{equation}
Here, Re is the Reynolds number. The advective fluxes are then defined as

\begin{equation}
\mathbf{f}^a =
          \begin{bmatrix}
           \rho u      \\           
           p + \rho u^2\\
           \rho u v    \\
           \rho u w    \\
           u (\rho e + p)
          \end{bmatrix},
          		\mathbf{g}^a = 
        				\begin{bmatrix}
				           \rho v      \\           
				           \rho u v    \\
           				   p + \rho v^2\\
				           \rho v w    \\
				           v (\rho e + p)
        				\end{bmatrix},
          						\mathbf{h}^a = 
          								\begin{bmatrix}
								           \rho w      \\           
								           \rho u w    \\
				           				   \rho v w    \\
								           p + \rho w^2\\
								           w (\rho e + p)
			          					\end{bmatrix},
\end{equation}
where the pressure $p$ is computed using the calorically perfect gas approximation. On the other hand, the diffusive fluxes are defined as
\begin{align}
\mathbf{f}^{\nu} &=
          \begin{bmatrix}
           0           \\           
           \tau_{xx}   \\
           \tau_{xy}   \\
           \tau_{xz}   \\
           u \tau_{xx} + v \tau_{xy} + w \tau_{xz} + \frac{\kappa}{(\gamma - 1) \rm{Pr} \rm{M}^2} T_x 
          \end{bmatrix}, \\
          		\mathbf{g}^{\nu} &= 
        				\begin{bmatrix}
				           0           \\           
				           \tau_{yx}   \\
				           \tau_{yy}   \\
				           \tau_{yz}   \\
				           u \tau_{yx} + v \tau_{yy} + w \tau_{yz} + \frac{\kappa}{(\gamma - 1) \rm{Pr} \rm{M}^2} T_y \\
        				\end{bmatrix}, \\
          						\mathbf{h}^{\nu} &= 
          								\begin{bmatrix}
								           0           \\           
								           \tau_{zx}   \\
								           \tau_{zy}   \\
								           \tau_{zz}   \\
								           u \tau_{zx} + v \tau_{zy} + w \tau_{zz} + \frac{\kappa}{(\gamma - 1) \rm{Pr} \rm{M}^2} T_z
			          					\end{bmatrix},
\end{align}
where $T$ is the temperature, $\gamma$ is the heat capacity ratio, and $\kappa$ is the thermal diffusivity. The nondimensional parameters are Pr, the Prandtl number; and M, the Mach number. The stress tensor components are computed using the Stokes hypothesis,
\begin{align}
\tau_{ij} &= \mu \left( \frac{\partial v_i}{\partial x_j} + \frac{\partial v_j}{\partial x_i} \right), i \neq j \\
\tau_{ii} &= 2 \mu \left( \frac{\partial v_i}{\partial x_i} + \nabla \cdot \mathbf{V} \right),
\end{align}
with $\mu$ the fluid's viscosity, and $\mathbf{V}$ the flow velocity. For the simulations in this paper we chose the typical parameters for air: $\rm{Pr} = 0.72$, $\gamma = 1.4$, while $\mu$ and $\kappa$ are calculated using Sutherland's law.

\end{document}